\begin{document}

\title{Quasi-periodic relativistic strings in the Minkowski space $\textbf{R}^{1+n}$%\thanks{Grants or other notes
%about the article that should go on the front page should be
%placed here. General acknowledgments should be placed at the end of the article.}
}
%\subtitle{Do you have a subtitle?\\ If so, write it here}

%\titlerunning{Short form of title}        % if too long for running head

\author{Weiping Yan $^{a}$, Binlin Zhang $^{b}$
         %etc.
}

%\authorrunning{Short form of author list} % if too long for running head

\institute{
\footnotesize $^a$ School of Mathematical Sciences, Xiamen University, Xiamen, 361005, China.\\
\email{yanwp@xmu.edu.cn}\\
\footnotesize $^b$ College of Mathematics and System Science, Shandong University of Science and Technology,
Qingdao, 266590, China\\
\email{zhangbinlin2012@163.com}}
\date{Received: date / Accepted: date}
% The correct dates will be entered by the editor

\maketitle

\begin{abstract}
In this article we consider the motion of relativistic strings in the Minkowski space $\textbf{R}^{1+n}$.
Those surfaces are known as a timelike minimal surface, and described by a system with $n$
nonlinear wave equations of Born-Infeld type. 
By constructing a suitable Nash-Moser
iteration scheme, we prove that the $n$-dimensional relativistic strings can admit a more generalized time quasi-periodic motion in $\textbf{R}^{1+n}$.
Moreover, those time quasi-periodic solutions are also timelike solutions.

\keywords{Hyperbolic equations \and Quasi-periodic solution  \and Nash-Moser iteration}
%\keywords{ \and }
% \PACS{PACS code1 \and PACS code2 \and more}
\subclass{MSC 35L10 \and MSC 35B10 \and MSC 83C15}
\end{abstract}

\section{Introduction and Main Result}
\label{intro}
Relativistic strings arise in the context of membrane, supermembrane theories and higher-dimensional extensions of string theory, where they are called $p$-branes according to the dimension of the spacelike object. This equation is also related to the Born-Infeld theory which appear in string theory and relativity theory. This triggers the revival of interests in the original Born-Infeld electromagnetism \cite{Born} and the exploration of Born-Infeld gauge theory \cite{Gibbon}.
From the mathematical point of view, this theory is a nonlinear generalization of the
Maxwell theory. Brenier \cite{Bel} even carried out a
study of the theory in the connection to hydrodynamics. On the other hand, this theory is
also related to the theory for the $\textbf{C}^2$ timelike extremal surfaces, i.e.
 timelike submanifolds with vanishing mean curvature.

In this paper, we consider the following $n$-nonlinear wave equations of Born-Infeld type
\begin{eqnarray}\label{E1-3}
|x_{\theta}|^2x_{tt}-2\langle x_t,x_{\theta}\rangle x_{t\theta}+(|x_t|^2-1)x_{\theta\theta}=0,
\end{eqnarray}
where $x=(x_1,\cdots,x_n)$ denotes an embedding from $\textbf{R}\times\textbf{T}$ to $\textbf{R}^{n}$, $x_k=x_k(t,\theta)$ is a suitable coordinate system in parameter form for $k=1,\cdots,n$ and $(t,\theta)\in\textbf{R}\times\textbf{T}$, $\textbf{T}$ denotes a one dimensional torus.

Equation (\ref{E1-3}) shows that the surface is extremal if and only if its mean curvature vector vanishes. Although in the process of deriving (\ref{E1-3}) we assume that the surface is timelike, there equations themselves do not need this assumption. One can see \cite{Kong2,Kong1} for more detail on the derivation of equation (\ref{E1-3}).

Membrane equations are one case of an important class of geometric wave equations which is the Lorentzian analogue of the minimal submanifold equations. Since such equations possess plenty of geometric phenomenon and complicated structure, (for example, they develop singularities in finite time, and degenerate and not strictly hyperbolic properties.) much work is attracted in recent years. The case of non-compact timelike maximal graphs in Minkowski spacetimes $\textbf{R}^{1+n}$ is fairly
well understood. Global well-posedness for sufficiently small initial data was established by
Brendle \cite{Brendle} and by Lindblad \cite{Lindblad}. The case of general codimension and local well-posedness in the light cone gauge were studied by Allen,
Andersson-Isenberg \cite{Allen} and Allen-Andersson-Restuccia \cite{Allen1}, respectively. Yan-Zhang \cite{YZ} showed that the bosonic membrane equation in the light cone gauge is local well-posedness on time interval $[0,\varepsilon^{-\frac{1}{2}}T]$ even if the initial Riemannian metric may be degenerate, where $\varepsilon$ is the small parameter measures the nonlinear effects.
Kong and his collaborators \cite{Kong2} showed that equation
(\ref{E1-3}) has a global $\textbf{C}^2$-solution and presented many numerical evidence where topology singularity formation is prominent, and they \cite{Kong1}
obtained a representation formula of solution and took a time-periodic motion. Bellettini, Hoppe, Novaga and Orlandi \cite{Bel} showed that if the initial curve is a centrally symmetric convex curve and the initial velocity is zero, the string shrinks to a point in
finite time. They noticed that it should be noted that the string does not become extinct there, but rather
comes out of the singularity point, evolves back to its original shape and then periodically
afterwards. Nguyen and Tian \cite{Ngu} showed that timelike maximal cylinders in orthogonal gauge in $\textbf{R}^{1+2}$ always develop topological singularities and that, infinitesimally at a generic singularity, their time slices are evolved
by a rigid motion or a self-similar motion. They also proved there exists a blow up in
non-flat backgrounds. Recently, Yan \cite{Yan0} showed that the one dimensional Born-Infeld equation admits a family of explicit stable self-similar blowup solutions.

In this paper, we show that the timelike minimal surface admits time the quasi-periodic motion in Minkowski space $\textbf{R}^{1+n}$.
More precisely, we obtain the following main result:
\begin{theorem}
The timelke minimal surface $\Sigma$ can have a time quasi-periodic motion in Minkowski space $\textbf{R}^{1+n}$, i.e. there exists a classical solution
\begin{eqnarray*}
x(t,\theta)=(t+\theta+u_1(t+\theta,\omega t),\cdots,t+\theta+u_n(t+\theta,\omega t))^T,
\end{eqnarray*}
which is time quasi-periodic with frequency $(1,\omega)$ and periodic in $\theta$ with periodic $T$, and satisfies (\ref{E1-3}) with the timelike character
\begin{eqnarray}\label{E1-R1*}
\langle x_t,x_{\theta}\rangle^2-(|x_t|^2-1)|x_{\theta}|^2>0,
\end{eqnarray}
where $\omega\in\mathcal{D}_{\gamma,\tau}$ satisfies
\begin{eqnarray}\label{E1-4R}
|n\omega^2l^2-j^2|\geq\frac{\gamma}{|l|^{\tau}},~~(l,j)\in\textbf{Z}^2/\{(0,0)\},
\end{eqnarray}
with $0<\gamma<1$ and $\tau>0$. Here $n\geq1$ denotes the dimension of Minkowski space, $\mathcal{D}_{\gamma,\tau}\subset[0,\epsilon_0]\times[\omega_1,\omega_2]$ denotes a Cantor like set of Lebesgue measure $|\mathcal{D}_{\gamma,\tau}|\geq\epsilon_0(|\omega_2-\omega_1|-C\gamma)$.

Furthermore, let $\epsilon_0>0$, $0<\sigma_{0}<\bar{\sigma}$. The Sobolev regularity solution of (\ref{E1-3}) with the timelike character gives rise to a global and uniqueness map $x(\epsilon,\omega)\in\textbf{C}^1([0,\epsilon_0]\times[\omega_1,\omega_2];\textbf{H}_{\bar{\sigma}})$ with $\|u(\epsilon,\omega)\|_{\bar{\sigma}}=O(\epsilon),~~\|\partial_{\epsilon,\omega}u(\epsilon,\omega)\|_{\bar{\sigma}}\leq C\gamma^{-1}$, where $C$ denotes a positive constant, $\omega$ denotes the frequency parameter, $\epsilon$ denotes the amplitude parameter.
\end{theorem}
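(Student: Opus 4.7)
The plan is to turn the existence question into a small-divisor problem on the $2$-torus. Substituting the ansatz $x_k=t+\theta+u_k(\phi,\psi)$ with $\phi=t+\theta$, $\psi=\omega t$, one has $\partial_t=\partial_\phi+\omega\partial_\psi$ and $\partial_\theta=\partial_\phi$, so $x_{\theta,k}=1+\partial_\phi u_k$ and $x_{t,k}=1+\partial_\phi u_k+\omega\partial_\psi u_k$. A direct computation then shows that at $u=0$ one has $|x_\theta|^2=n$, $\langle x_t,x_\theta\rangle=n$ and $|x_t|^2-1=n-1$, and the linearisation of (\ref{E1-3}) on each component collapses to the remarkably clean operator
\begin{equation*}
L_\omega\;=\;n\omega^2\partial_\psi^2-\partial_\phi^2,
\end{equation*}
whose Fourier symbol on $e^{i(l\phi+j\psi)}$ equals $j^2-n\omega^2 l^2$. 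This is exactly the quantity appearing in (\ref{E1-4R}), which justifies the choice of Diophantine condition and immediately yields the measure estimate $|\mathcal{D}_{\gamma,\tau}|\geq\epsilon_0(|\omega_2-\omega_1|-C\gamma)$ by the classical counting of resonant strips $\{|n\omega^2 l^2-j^2|<\gamma|l|^{-\tau}\}$ in $[\omega_1,\omega_2]$.

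Writing the full problem as $\mathcal{F}(u,\omega,\epsilon)=0$ with $\mathcal{F}(0,\omega,0)=0$, $D_u\mathcal{F}(0,\omega,0)=\mathrm{diag}(L_\omega)$, and introducing the amplitude by rescaling $u\mapsto\epsilon u$, I would construct the solution by a Nash-Moser-Newton iteration in a scale of Sobolev spaces $\mathbf{H}_\sigma$ with widths $\sigma_j\searrow\bar\sigma$. Starting from $u_0=0$, at each step one solves a linearised problem $\mathcal{L}_j\,h=-\mathcal{F}(u_j,\omega,\epsilon)$ and sets $u_{j+1}=u_j+h$. The loss of derivatives $\sim|l|^\tau$ inherited from (\ref{E1-4R}) is absorbed by the super-exponential convergence of Newton's method together with standard smoothing operators, in the now textbook Berti-Bolle format.

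The principal obstacle is the inversion of $\mathcal{L}_j$ when $u_j\neq 0$: because (\ref{E1-3}) is \emph{quasilinear}, $\mathcal{L}_j$ has non-constant coefficients in the top-order derivatives, so it is not a small bounded perturbation of $L_\omega$. To deal with this I would follow the quasilinear KAM strategy: first, by a finite sequence of changes of variable on $\mathbf{T}^2$ (diffeomorphisms of the $\phi$-circle, multiplications by invertible symbols, and pseudo-differential conjugations) reduce the principal symbol of $\mathcal{L}_j$ to constant coefficients, obtaining $L_\omega$ plus a regularising remainder of order $\epsilon$; second, apply a KAM reducibility scheme to diagonalise the remainder, imposing at each substep second-order Melnikov conditions that are all implied by membership in $\mathcal{D}_{\gamma,\tau}$. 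The resulting diagonal operator is then invertible with tame estimates, costing one factor of $\gamma^{-1}$ per inversion. This is the delicate part and where most of the analytic work goes.

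Once convergence on $\mathcal{D}_{\gamma,\tau}$ delivers $u=u(\epsilon,\omega)\in\mathbf{H}_{\bar\sigma}$ with $\|u\|_{\bar\sigma}=O(\epsilon)$, the $\mathbf{C}^1$ dependence on the parameters and the bound $\|\partial_{\epsilon,\omega}u\|_{\bar\sigma}\leq C\gamma^{-1}$ follow by differentiating $\mathcal{F}(u,\omega,\epsilon)=0$ and inverting the (already-constructed) linearised operator once more, losing a single factor of $\gamma^{-1}$ through (\ref{E1-4R}). Finally, the timelike character (\ref{E1-R1*}) is automatic: at $u=0$ the left-hand side equals $n^2-(n-1)n=n>0$, and this positive quantity is a continuous function of $(u,\nabla u)$, so it persists by smallness of $u$ in $\mathbf{H}_{\bar\sigma}$. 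This simultaneously verifies that the constructed $x$ is a genuine classical solution and that the surface it parametrises is timelike.
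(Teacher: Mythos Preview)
Your reduction to the operator $L_\omega=n\omega^2\partial_\psi^2-\partial_\phi^2$, the rescaling $u\mapsto\epsilon u$, the Nash--Moser framework, and the verification of the timelike condition all match the paper exactly. The genuine divergence is in how you propose to invert the linearised operator $\mathcal{L}_j$ at a nonzero approximate solution.

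The paper does \emph{not} use a quasilinear reducibility scheme. Instead it follows the Craig--Wayne--Bourgain multiscale approach in the form given by Berti--Bolle and Berti--Procesi: one splits the Fourier lattice $\Omega_{N_p}$ into regular sites $R=\{|d_{(l,j)}|\geq\varsigma\}$ and singular sites $S$, represents $\mathcal{J}_\omega^{(N_p)}$ as a $2\times2$ block matrix over $\mathbf{H}_R\oplus\mathbf{H}_S$, inverts via the resolvent identity, and controls the ``quasi-singular'' block using separation properties of the singular clusters together with the algebra $\mathcal{A}_s$ of polynomially localised matrices. The quasilinear nature of the equation is handled crudely, by absorbing the variable top-order coefficients into the Toeplitz perturbation $T$ with bounds $|T|_s\leq C_\epsilon K(s)N_p^{4+s'}\|w\|_s$; the extra powers of $N_p$ are then beaten by the super-exponential decay of the Newton error. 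This costs only first-order Melnikov conditions: (\ref{E1-4R}) together with an auxiliary Diophantine condition on $\omega^2$ (the paper's hypothesis $|\omega^2 q-p|\geq\gamma/\max\{1,|p|^\mu\}$) suffice.

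Your route via pseudo-differential conjugation and KAM reducibility is a legitimate alternative and is indeed the modern treatment for quasilinear problems, but your assertion that the required second-order Melnikov conditions ``are all implied by membership in $\mathcal{D}_{\gamma,\tau}$'' is not justified: reducibility schemes normally impose lower bounds on differences $|\mu_l(\omega)-\mu_{l'}(\omega)|$ of perturbed eigenvalues, which are \emph{not} consequences of (\ref{E1-4R}) alone and typically force further excisions of $\omega$. If you pursue this line you must either verify those conditions explicitly and show the measure loss is still $O(\gamma)$, or explain why in this particular setting they reduce to first-order ones. A second, smaller discrepancy: you initialise at $u_0=0$, whereas the paper starts from a nontrivial $w^0$ and devotes a separate lemma to showing the limit is not constant; with $u_0=0$ and a homogeneous nonlinearity the iteration stays at zero, so you should say where a nontrivial seed comes from.
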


We notice that above result takes a small initial data, i.e. for the small initial
data $x(0,\theta)=x_0=\theta+\epsilon u_0(\theta)$, above result holds. Specially, for a closed string, it can take time quasi-periodic motion for $(t,\theta)\in\textbf{T}^2$. We mention that Kong and Zhang \cite{Kong1} proved the existence of time periodic solution of the timelike minimal surface equation (\ref{E1-3}). Meanwhile, they gave a representation formula of solution. Due to loss of ``good'' structure of equation (see (\ref{E1-5})) and the ``small divisor'' problem, for showing the time quasi-periodic solution, the representation formula of solution can not use. We also notice that timelike maximal cylinders in orthogonal gauge in $\textbf{R}^{1+2}$ always develop singularities in finite time (see \cite{Ngu}), but in other gauge, this timelike minimal surface equation can have a global smooth solution. For example, Kong \cite{Kong2} showed that equation (\ref{E1-3}) has a global $\textbf{C}^2$-solution.

We remark that the form of quasi-periodic solution
\begin{eqnarray}\label{E1-4}
x_k(t,\theta)=t+\theta+u_k(t+\theta,\omega t),~~k=1,\cdots,n,
\end{eqnarray}
 depends on the structure of the equation (\ref{E1-3}). More precisely, in order to obtain the wave operator $\partial_{tt}-\partial_{xx}$, direct computation shows that the solution need to be translated by $t+\theta$. In other word, let $t+\theta=\vartheta$, the target of this paper is to prove that equation (\ref{E1-3}) has a quasi-periodic solution $x(\vartheta,\omega t)=(\vartheta+u_1(\vartheta,\omega t),\cdots,\vartheta+u_n(\vartheta,\omega t))^{T}\in\textbf{T}^n$. This is motivated by standard KAM theory (see for example \cite{Ar}) which implies that for sufficient small $\epsilon>0$, the ODE
\begin{eqnarray*}
x''+\epsilon f_x(t,x)=0
\end{eqnarray*}
admits many quasi-periodic solutions, that
is, solutions of the form
\begin{eqnarray*}
x(t)=t+u(t,\omega t),
\end{eqnarray*}
where $\omega$ is diophantine and belongs to the set of Diophantine numbers in $\textbf{R}$, $f(t,x)$ denotes a external force and satisfies some assumption on $x$.

Assumption
(\ref{E1-4R}) means that the forcing frequency $\omega$ does not enter in resonance with the normal
mode frequencies $\omega_j:=|j|$ of oscillations of the membrane. By standard arguments, (\ref{E1-4R}) is satisfied for all $\omega\in[\omega_1,\omega_2]$ but a subset of measure $O(\gamma)$, for fixed $0<\omega_1<\omega_2$. Since the nonresonance condition (\ref{E1-4R}) is the same with \cite{Berti2}, so we omit the proof process of measure estimate.

Inserting (\ref{E1-4}) into (\ref{E1-3}), thus the motion for the displacement is governed by a nonlinear wave system
\begin{eqnarray}\label{E1-5}
n\omega^2u_{tt}-u_{yy}+\omega^2\left(2\sum_{k=1}^nu_{ky}+|u_y|^2\right)u_{tt}-2\omega^2\left(\sum_{k=1}^nu_{kt}+\langle u_t, u_y\rangle\right)u_{ty}+\omega^2|u_t|^2u_{yy}=0,~~~
\end{eqnarray}
where $u=(u_1,\cdots,u_n)$, $(t,y)\in\textbf{T}^2$ and $k=1,\cdots,n$.

Let us denote by $\textbf{H}_{s}:=H_s\underbrace{\times\cdots\times}_{n}H_s$ the real Sobolev space with the norm $\|U\|_{s}=\sum_{k=1}^n\|u_k\|_s$, for all $U=(u_1,\cdots,u_n)\in\textbf{H}_{s}$, where
\begin{eqnarray*}
H_s&:=&H_s(\textbf{T}^2;\textbf{R})\nonumber\\
&:=&\left\{u(t,x)=\sum_{(l,j)\in\textbf{Z}^2}u_{l,j}e^{i(lt+jy)},~~u_{l,j}^*=u_{-l,-j}|\|u\|_s^2:=\sum_{(l,j)\in\textbf{Z}^2}e^{2(|l|+|j|)s}|u_{l,j}|^2<+\infty\right\},
\end{eqnarray*}
and $s>1$. $\textbf{H}_{s}$ and $H_s$ are Banach algebra with respect to the multiplication of
functions, namely
\begin{eqnarray*}
u_1,u_2\in H_{s}\Longrightarrow u_1u_2\in H_{s}~~and~~\|u_1u_2\|_{s}\leq\|u_1\|_{s}\|u_2\|_{s}.
\end{eqnarray*}

In fact, we reduce our problem into finding quasi-periodic solution like (\ref{E1-4}) of $n$-nonlinear wave equations of
Born-Infeld type (\ref{E1-3}). This orbit of time-quasi-periodic solution forms a torus $\mathcal{T}$.
The problem of finding periodic solutions for nonlinear PDEs has attracted much
attention which dates back to the work of Rabinowitz \cite{Rabi1}. He studied the existence of time-periodic
solutions with a rational frequency for a one-dimensional nonlinear wave equation
by variational approach. But for other frequencies of time-periodic solution, variational
approach seems weaker, due to the appearance of the so-called ``small divisor''. The first result to overcome this difficulty in solving one dimensional nonlinear wave equation were constructed by Kuksin \cite{Kuksin1}. Later on,  Craig and Wayne \cite{Craig} treated the same problem with the periodic
boundary condition for time-periodic solutions by using the analytic Newton iteration scheme combining the Lyapunov-Schmidt decomposition. The main difficulty of this strategy lies in the Green's function analysis and the control of the inverse of infinite matrices with small eigenvalues. Bourgain \cite{Bourgain1,Bourgain2} extended the Craig-Wayne's method to obtain
the existence of quasi-periodic solutions of nonlinear wave equations and Schr\"{o}dinger equations. Berti and Bolle \cite{Berti2} generalized previous results of Bourgain to more general nonlinearities of class of $\textbf{C}^k$ and assumed weaker non-resonance conditions. We notice that above results deal with nonlinear PDE with Hamiltonian structure, but nonlinear wave system (\ref{E1-5}) has not Hamiltonian structure. Thus, inspired by the work of \cite{PRR,R,Yan,YZ}, a suitable Nash-Moser iteration scheme needs to be constructed.

Instead of looking for solutions of (\ref{E1-5}) in a shrinking neighborhood of zero, it is a convenient device to perform the rescaling
\begin{eqnarray*}
u\longrightarrow\epsilon u,~~\epsilon>0,
\end{eqnarray*}
having
\begin{eqnarray}\label{E1-6}
n\omega^2u_{tt}-u_{yy}+\epsilon\omega^2\left(2\sum_{k=1}^nu_{ky}+\epsilon|u_y|^2\right)u_{tt}-2\epsilon\omega^2\left(\sum_{k=1}^nu_{kt}+\epsilon\langle u_t, u_y\rangle\right)u_{ty}+\epsilon^2\omega^2|u_t|^2u_{yy}=0.~~~~~
\end{eqnarray}
We perform the Lyapunov-Schmidt reduction via the orthogonal decomposition
\begin{eqnarray*}
\textbf{H}_{s}=\textbf{H}_{s}^0\oplus\textbf{R},
\end{eqnarray*}
where $\textbf{H}_s^0$ denotes the Sobolev functions with zero mean value.

Then the solution of (\ref{E1-3}) take the form
\begin{eqnarray}\label{E1-10}
u(t,\theta)=w(t,\theta)+M,
\end{eqnarray}
where $w(t,\theta)\in\textbf{H}_{s}^0$ and $M$ is a constant.

Inserting (\ref{E1-10}) into (\ref{E1-6}), the problem is reduced to
\begin{eqnarray}\label{E1-7}
n\omega^2w_{tt}-w_{yy}+\epsilon\omega^2\left(2\sum_{k=1}^nw_{ky}+\epsilon|w_y|^2\right)w_{tt}-2\epsilon\omega^2\left(\sum_{k=1}^nw_{kt}+\epsilon\langle w_t, w_y\rangle\right)w_{ty}+\epsilon^2\omega^2|w_t|^2w_{yy}=0.~~~~
\end{eqnarray}

Assume that equation (\ref{E1-3}) has a time quasi-periodic solution $x(t,\theta)=(t+\theta+u_1(t+\theta,\omega t),\cdots,t+\theta+u_n(t+\theta,\omega t))^T$, we shows that the timelike character (\ref{E1-R1*}) holds. Note that the solution in Theorem 1 is a small solution and the rescaling $u\longrightarrow\epsilon u$. By direction computation and (\ref{E1-R1*}), for sufficient small $\epsilon>0$, we have
\begin{eqnarray*}
\langle x_t,x_{\theta}\rangle^2&-&(|x_t|^2-1)|x_{\theta}|^2=\left(\sum_{k=1}^n(1+2\epsilon u_{ky}+\omega\epsilon u_{kt}+\epsilon^2u_{ky}^2+\omega\epsilon^2 u_{kt}u_{ky})\right)^2\\
&&-\left(\sum_{k=1}^n(1+2\epsilon u_{ky}+2\omega\epsilon u_{kt}+2\omega\epsilon^2u_{ky}u_{kt}+\epsilon^2u_{ky}^2+\omega^2\epsilon^2u_{kt}^2)-1\right)\cdot\left(\sum_{k=1}^n(1+2\epsilon u_{ky}+\epsilon^2u_{ky}^2)\right)\\
&=&n+\epsilon R(u)>0,
\end{eqnarray*}
where $R(u)$ denotes the terms of $u$'s derivatives and $n\geq 1$ is the dimension of Minkowski space. Thus, for small amplitude time quasi-periodic solution of equation (\ref{E1-3}), the timelike character (\ref{E1-R1*}) holds.

The structure of the paper is as follows: In next section, we give a crucial analysis of
the inverse of the linearized operators, which plays a crucial role in the Nash-Moser iteration.
The last section is devoted to construct a Nash-Moser iteration scheme to solve the nonlinear equation (\ref{E1-7}).

\begin{acknowledgements}
The first author expresses his sincerely thanks to the BICMR of Peking University and Professor Gang Tian for constant support and encouragement.
The first author is supported by NSFC No 11771359, and the Fundamental Research Funds for the Central Universities (Grant No. 20720190070, No.201709000061 and No. 20720180009). The second author is supported by NSFC (No. 11871199).
\end{acknowledgements}

\section{Analysis of the Linearized operator}

This section will carry out the analysis of the inverse of linearized operator. We consider the orthogonal splitting
\begin{eqnarray*}
\textbf{H}_s:=\textbf{W}^{(N_p)}\oplus\textbf{W}^{(N_p)\bot},
\end{eqnarray*}
where
\begin{eqnarray*}
\textbf{W}^{(N_p)}=\{w=(w_1,\cdots,w_n)\in\textbf{H}_s|w_k=\sum_{|(l,j)|\leq N_p}w_{k,l,j}e^{i(lt+jy)},~~k=1,\cdots,n\}
\end{eqnarray*}
and
\begin{eqnarray*}
\textbf{W}^{(N_p)\bot}=\{w=(w_1,\cdots,w_n)\in\textbf{H}_s|w_k=\sum_{|(l,j)|\geq N_p}w_{k,l,j}e^{i(lt+jy)},~~k=1,\cdots,n\},
\end{eqnarray*}
where $p$ denotes the ``$p$''th iterative step. For a given suitable $N_0>1$, we take $N_p=N_0^p$, $\forall p\in\textbf{N}$.

Define the projectors
$\Psi^{(N_p)}:\textbf{H}_s\longrightarrow\textbf{W}^{(N_p)}$. For $\forall N>0,~\forall s,~d\geq0$, it satisfies the ``smoothing'' properties:
\begin{eqnarray}
&&\|\Psi^{(N)}w\|_{s+d}\leq e^{N_p^d}\|w\|_s,~~\forall w\in \textbf{H}_s,\nonumber\\
\label{E3-1R2}
&&\|(I-\Psi^{(N)})w\|_{s}\leq N_p^{-d}\|w\|_{s+d},~~\forall w\in \textbf{H}_{s+d}.
\end{eqnarray}
Consider the truncation equation of (\ref{E1-7})
\begin{eqnarray}\label{E3-6}
\mathcal{J}(w):=J_{\omega}w+2\omega^2\epsilon\Psi^{(N_p)}f(w_t,w_y,w_{tt},w_{yy})=0,
\end{eqnarray}
where
\begin{eqnarray*}
J_{\omega}w=n\omega^2w_{tt}-w_{yy}
\end{eqnarray*}
and
\begin{eqnarray}\label{E3-1}
f(w_t,w_y,w_{tt},w_{yy})=\sum_{k=1}^n(w_{ky}+\frac{\epsilon}{2}|w_y|^2)w_{tt}
-\sum_{k=1}^n(w_{kt}+\epsilon\langle w_t, w_y\rangle)w_{ty}+\frac{\epsilon}{2}|w_t|^2w_{yy}.~~~~~
\end{eqnarray}
By direct computation, the linearized operator of (\ref{E3-6}) has the following form
\begin{eqnarray}\label{E3-3}
\mathcal{J}_{\omega}^{(N_p)}:=\Psi^{(N_p)}(\mathcal{J}_{\omega}+2\omega^2\epsilon D_{w}f(w_t,w_y,w_{tt},w_{yy}))|_{\textbf{W}^{(N_p)}},
\end{eqnarray}
where
\begin{eqnarray}\label{E3-3R1}
D_{w}f(w_t,w_y,w_{tt},w_{yy})h&=&\sum_{k=1}^n(w_{ky}+\frac{\epsilon}{2}|w_y|^2)h_{tt}-\sum_{k=1}^n(w_{ky}+\epsilon\langle w_t,w_y\rangle)h_{ty}\nonumber\\
&&+\frac{\epsilon}{2}|w_t|^2h_{yy}+\sum_{k=1}^nh_{ky}w_{tt}+2\epsilon\langle w_y,h_y\rangle w_{tt}\nonumber\\
&&-\sum_{k=1}^nh_{kt}w_{ty}-\epsilon(\langle w_t,h_y\rangle+\langle h_t,w_y\rangle) w_{ty}+\epsilon\langle w_t,h_t\rangle w_{yy}.~~~~~~
\end{eqnarray}
\begin{lemma}
For any $s>0$, there exists a positive constants $C_{\epsilon}$ such that
\begin{eqnarray}\label{E3-3R2}
\|\Psi^{(N_p)}D_wf(w_t,w_y,w_{tt},w_{yy})h\|_{s}\leq C_{\epsilon} N_p^4\left(\|w\|_{s}^2\|h\|_{s}+2\|w\|_{s}\|h\|_{s}\right),~~~~
\end{eqnarray}
\begin{eqnarray}
\label{E3-3R3}
\|\Psi^{(N_p)}(f(w_t+h_t,w_y+h_y,w_{tt}+h_{tt},w_{yy}+h_{yy})&-&f(w_t,w_y,w_{tt},w_{yy})-D_wf(w_t,w_y,w_{tt},w_{yy})h)\|_{s}\nonumber\\
&\leq& C_{\epsilon}N_p^4(C_3+\|w\|_{s})(\|h\|_{s}^2+\|h\|_{s}^3),
\end{eqnarray}
\begin{eqnarray}
\label{E3-3R4}
\|\Psi^{(N_p)}f(w_t,w_y,w_{tt},w_{yy})\|_{s}\leq C_{\epsilon}N_p^4\|w\|_{s}^3.
\end{eqnarray}
\end{lemma}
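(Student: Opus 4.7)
The plan is to prove all three estimates by combining two elementary facts: the Banach algebra property of $H_s$ stated in the paper, and the Bernstein-type inequality
\[
\|\partial_t^a \partial_y^b w\|_s \leq N_p^{a+b}\|w\|_s \qquad \forall\, w \in \textbf{W}^{(N_p)},
\]
which follows directly from the definition of $\|\cdot\|_s$ once one notes that frequencies in $\textbf{W}^{(N_p)}$ satisfy $|l|,|j|\leq N_p$. The projection $\Psi^{(N_p)}$ is nonexpansive on $H_s$, so it may be discarded whenever convenient.

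For (\ref{E3-3R2}), I would inspect $D_w f(w_t,w_y,w_{tt},w_{yy})h$ term-by-term using (\ref{E3-3R1}). Each summand is a product of two or three factors drawn from the jets of $w$ and $h$, with total derivative order at most four (the extreme case being cubic products such as $|w_y|^2 h_{tt}$ or $\langle w_t,h_y\rangle w_{ty}$). Applying the Banach algebra property factor by factor, together with the Bernstein inequality above to absorb each derivative into an $N_p$ factor, yields a contribution of the form $N_p^4\|w\|_s\|h\|_s$ for the quadratic-in-$w$ terms and $\epsilon N_p^4\|w\|_s^2\|h\|_s$ for the cubic-in-$w$ terms. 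Summing and absorbing the $\epsilon$ prefactors into $C_\epsilon$ produces the stated bound. Estimate (\ref{E3-3R4}) follows by the same term-by-term procedure applied directly to (\ref{E3-1}): the quadratic monomials $w_{ky}w_{tt}$ and $w_{kt}w_{ty}$ contribute a multiple of $N_p^3\|w\|_s^2$, and the cubic monomials $|w_y|^2 w_{tt}$, $\langle w_t,w_y\rangle w_{ty}$, $|w_t|^2 w_{yy}$ (each carrying a prefactor $\epsilon$) contribute $\epsilon N_p^4\|w\|_s^3$.

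Finally, (\ref{E3-3R3}) rests on the key observation that $f$ is a \emph{polynomial} of total degree three in the jet $(w_t,w_y,w_{tt},w_{yy})$. Consequently the Taylor expansion
\[
f(w+h)-f(w)-D_w f(w)h \;=\; \frac{1}{2}D_w^2 f(w)[h,h]+\frac{1}{6}D_w^3 f[h,h,h]
\]
is \emph{exact}, with no integral remainder to control. The Hessian $D_w^2 f(w)[h,h]$ is affine in $w$: its constant-in-$w$ part comes from the quadratic monomials of $f$ and produces the $C_3\|h\|_s^2$ contribution, while its linear-in-$w$ part (carrying an $\epsilon$) produces the $\|w\|_s\|h\|_s^2$ contribution. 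The third differential $D_w^3 f[h,h,h]$ is $\epsilon$ times a constant trilinear form, producing $\|h\|_s^3$. After applying algebra and Bernstein as above, all terms pick up the common $N_p^4$ factor, yielding (\ref{E3-3R3}). No step presents a serious obstacle: the entire proof is bookkeeping, and the only care required is in tracking powers of $\epsilon$ and derivative counts so that the three estimates match exactly the stated form.
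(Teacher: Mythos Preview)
Your proposal is correct and follows essentially the same route as the paper: the paper carries out the same term-by-term estimation, but instead of stating an abstract Bernstein inequality it writes out one representative Fourier computation (for $\Psi^{(N_p)}\sum_k w_{ky}h_{tt}$) and then lists the analogous bounds for the remaining terms; your packaging via ``Bernstein $+$ Banach algebra'' is exactly the content of that computation. Your observation that $f$ is a cubic polynomial in the jet, so that the second-order Taylor remainder is exact, is a clean way to organise the proof of (\ref{E3-3R3}); the paper instead expands $f(w+h)-f(w)-D_wf(w)h$ by hand and estimates the resulting expression directly, but the two arguments are equivalent.
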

\begin{proof}
By direct computation, we have
\begin{eqnarray}\label{E2-A}
\|\Psi^{(N_p)}\sum_{k=1}^nw_{ky}h_{tt}\|_{s}&\leq&\sum_{k=1}^n\|\Psi^{(N_p)}(w_{ky}h_{tt})\|_{s}\nonumber\\
&=&\sum_{k=1}^n\|\Psi^{(N_p)}(\sum_{(l,j)\in\textbf{Z}^2}j(iw_{k,l,j})e^{i(lt+jy)})(\sum_{(l,j)\in\textbf{Z}^2}l^2h_{l,j}e^{i(lt+jy)})\|_{s}\nonumber\\
%&\leq&\sum_{k=1}^n\|\sum_{k=1}^n\sum_{|(l,j)|\leq N_p}j(iw_{k,l,j})e^{i(lt+jy)}\|_{s}\|\sum_{|(l,j)|\leq N_p}l^2h_{k,l,j}e^{i(lt+jy)}\|_{s}\nonumber\\
&\leq&N_p^3\sum_{k=1}^n\left(\sum_{(l,j)\in\textbf{Z}^2}|w_{k,l,j}|e^{s(|l|+|j|)}\right)\left(\sum_{(l,j)\in\textbf{Z}^2}|h_{l,j}|e^{s(|l|+|j|)}\right)\nonumber\\
&\leq&N_p^3\|w\|_{s}\|h\|_{s}.
\end{eqnarray}
%and
%\begin{eqnarray}\label{E2-AA}
%\|\Psi^{(N_p)}|w_y|^2h_{tt}\|_{s}&=&\sum_{k=1}^n\|\Psi^{(N_p)}|w_y|^2h_{ktt}\|_{s}\leq\sum_{k=1}^n\|\Psi^{(N_p)}|v_y+w_y|^2\|_{s}\|\Psi^{(N_p)}h_{ktt}\|_{s}\nonumber\\
%&\leq&\sum_{k=1}^n\left(\Psi^{(N_p)}\|\langle v_y,v_y\rangle\|_s+2\|\Psi^{(N_p)}\langle v_y, w_y\rangle\|_s+\|\Psi^{(N_p)}\langle w_y,w_y\rangle\|_s\right)\|\Psi^{(N_p)}h_{ktt}\|_{s}\nonumber\\
%&\leq&N_p^4\sum_{k=1}^n\left(\|v\|_{s}^2+2\|v\|_{s}\|w\|_{s}+\|w\|_{s}^2\right)\|h_{k}\|_{s}\nonumber\\
%&\leq&N_p^4\left(\|v\|_{s}^2+2\|v\|_{s}\|w\|_{s}+\|w\|_{s}^2\right)\|h\|_{s}.
%\end{eqnarray}
Using the similar computation method, we obtain
\begin{eqnarray*}
&&\|\Psi^{(N_p)}\sum_{k=1}^nw_{ky}h_{ty}\|_{s}\leq N_p^3\|w\|_{s}\|h\|_{s},\\
&&\|\Psi^{(N_p)}\langle w_t,w_y\rangle h_{ty}\|_{s},~\||w_t|^2h_{yy}\|_{s}\leq N_p^4\|w\|_{s}^2\|h\|_{s},\\
&&\|\Psi^{(N_p)}\sum_{k=1}^nh_{ky}w_{tt}\|_{s},~~\|\Psi^{(N_p)}\sum_{k=1}^nh_{kt}w_{ty}\|_{s}\leq N_p^3\|h\|_{s}\|w\|_{s},\\
&&\|\Psi^{(N_p)}\langle w_y,h_y\rangle w_{tt}\|_{s},~\|\Psi^{(N_p)}\langle w_t,h_y\rangle w_{ty})\|_{s}\leq N_p^4\|w\|_{s}^2\|h\|_{s},\\
&&\|\Psi^{(N_p)}\langle h_t,w_y\rangle w_{ty}\|_{s},~\|\Psi^{(N_p)}\langle w_t,h_t\rangle w_{yy}\|_{s}\leq N_p^4\|w\|_{s}^2\|h\|_{s}.
\end{eqnarray*}
Above estimate combining with the Young inequality give that
\begin{eqnarray*}
\|\Psi^{(N_p)}D_wf(w_t,w_y,w_{tt},w_{yy})h\|_s\leq C_{\epsilon}N_p^4\left(\|w\|_{s}^2\|h\|_{s}+2\|w\|_{s}\|h\|_{s}\right),
\end{eqnarray*}
where $C_{\epsilon}$ denotes a constant depending on $\epsilon$.

Next we prove (\ref{E3-3R3}) and (\ref{E3-3R4}). By (\ref{E3-1}) and (\ref{E3-3R1}), we derive
\begin{eqnarray*}
f(w_t&+&h_t,w_y+h_y,w_{tt}+h_{tt},w_{yy}+h_{yy})-f(w_t,w_y,w_{tt},w_{yy})-D_wf(w_t,w_y,w_{tt},w_{yy})h\nonumber\\
&=&(\sum_{k=1}^nh_{ky})h_{tt}+\frac{\epsilon}{2}(2\langle w_y,h_y\rangle+|h_y|^2)h_{tt}+\epsilon|h_y|^2w_{tt}+(\sum_{k=1}^nh_{kt})h_{ty}\nonumber\\
&&+\epsilon\left((\langle w_t,h_y\rangle+\langle w_y,h_t\rangle)h_{ty}+\langle h_t,h_y\rangle w_{ty}\right)+2\epsilon(\langle w_t,h_t\rangle+|h_t|^2) h_{yy}.
\end{eqnarray*}
By the similar estimate with (\ref{E2-A}), we obtain
\begin{eqnarray*}
\|\Psi^{(N_p)}(f(w_t&+&h_t,w_y+h_y,w_{tt}+h_{tt},w_{yy}+h_{yy})-f(w_t,w_y,w_{tt},w_{yy})-D_wf(w_t,w_y,w_{tt},w_{yy})h)\|_{s}\nonumber\\
&\leq& C_{\epsilon}N_p^4(C_3+\|w\|_{s})(\|h\|_{s}^2+\|h\|_{s}^3).
\end{eqnarray*}
The estimate (\ref{E3-3R4}) is similar, so we omit it. This completes the proof.
\end{proof}
Now we give the main result in this section.
\begin{lemma}
Assume that
\begin{eqnarray}\label{E10-1}
|\omega^2q-p|\geq\frac{\gamma}{\max\{1,|p|^{\mu}\}},~~\forall(q,p)\in\textbf{Z}^2/\{(0,0)\},~\forall\mu>1.
\end{eqnarray}
and $\|w\|_{\bar{\sigma}}\leq 1$, $\forall$ $1\leq r\leq N_p$,
\begin{eqnarray}\label{E8-1}
\|(\mathcal{J}^{(r)}_{\omega}(\epsilon,w))^{-1}\|_0\leq\frac{r^{\tau}}{\gamma_1}.
\end{eqnarray}
Then the linearized operator $\mathcal{J}^{(N_p)}_{\omega}(\epsilon,w)$ is invertible and
$\forall$ $s_2>s_1>\bar{\sigma}>0$, the linearized operator $\mathcal{J}_{\omega}^{(N_p)}$ satisfies
\begin{eqnarray}\label{E3-4}
\|(\mathcal{J}^{{(N_p)}}_{\omega}(\epsilon,w))^{-1}h\|_{s_1}\leq C(s_2-s_1)N_p^{\tau+\kappa_0}\left(1+\epsilon cN_p^{4+s_2}\|w\|_{s_2}\right)^3\|h\|_{s_2},~~~
\end{eqnarray}
where $C(s_2-s_1)=c(s_2-s_1)^{-\tau}$, $c=c_{\varsigma,\tau,s_1,s_2,\gamma_1,\gamma}$ denotes a constant.
\end{lemma}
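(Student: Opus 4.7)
The approach is the standard multi-scale Nash--Moser inversion scheme, combining three ingredients already at hand: the Diophantine bound (\ref{E10-1}) controlling the Fourier-diagonal symbol of the operator, the tame estimates on the nonlinear perturbation from the previous lemma, and the inductive $L^2$-inverse bound (\ref{E8-1}) at all sub-scales.

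First I would split $\mathcal{J}^{(N_p)}_\omega = J^{(N_p)}_\omega + R$, where $J^{(N_p)}_\omega$ is the Fourier-diagonal part acting by $j^2 - n\omega^2 l^2$ on the mode $e^{i(lt+jy)}$, and $R:=2\omega^2\epsilon\,\Psi^{(N_p)} D_w f(w_t,w_y,w_{tt},w_{yy})|_{\textbf{W}^{(N_p)}}$ is the nonlinear perturbation. Applying (\ref{E10-1}) with $q=nl^2$ and $p=j^2$ yields $|j^2 - n\omega^2 l^2|\geq \gamma N_p^{-2\mu}$ for all nonzero modes in $\textbf{W}^{(N_p)}$; since $J^{(N_p)}_\omega$ is diagonal, this transfers at once to the operator norm in every Sobolev space, giving $\|(J^{(N_p)}_\omega)^{-1}\|_{s\to s}\leq N_p^{\tau}/\gamma$. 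Together with (\ref{E8-1}) at scale $r=N_p$, which already supplies the $L^2$-bound on $(\mathcal{J}^{(N_p)}_\omega)^{-1}$ and thereby settles invertibility on the finite-dimensional space $\textbf{W}^{(N_p)}$, these are the two quantitative inputs I would combine.

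To promote the $L^2$ bound to the tame Sobolev estimate (\ref{E3-4}), I would iterate the resolvent identity $v = (J^{(N_p)}_\omega)^{-1}(h - Rv)$ up to order three, writing
\[
v = \sum_{k=0}^{2}\bigl(-(J^{(N_p)}_\omega)^{-1} R\bigr)^k (J^{(N_p)}_\omega)^{-1} h + \bigl(-(J^{(N_p)}_\omega)^{-1} R\bigr)^3 v.
\]
The first three terms are controlled in $\|\cdot\|_{s_1}$ by composing the diagonal bound $N_p^\tau/\gamma$ with the tame estimate (\ref{E3-3R2}) on $R$; the resulting factors $(\epsilon c N_p^{4+s_2}\|w\|_{s_2})^k$ for $k=0,1,2$ are majorised by the cube $N_p^\tau(1+\epsilon c N_p^{4+s_2}\|w\|_{s_2})^3\|h\|_{s_2}$. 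For the remainder $((J^{(N_p)}_\omega)^{-1}R)^3 v$ I would switch to the low norm $\|v\|_0$ using the smoothing property (\ref{E3-1R2}) on $\textbf{W}^{(N_p)}$, apply (\ref{E8-1}) to exchange $\|v\|_0$ for $\|h\|_0\leq\|h\|_{s_2}$, and absorb the interpolation loss between $s_1$ and $s_2$ into the constant $C(s_2-s_1)=c(s_2-s_1)^{-\tau}$; the residual polynomial factor $N_p^{\kappa_0}$ then collects the smoothing loss.

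The main obstacle, in my view, is that the formal Neumann series $\sum_k\bigl(-(J^{(N_p)}_\omega)^{-1}R\bigr)^k$ does \emph{not} converge in operator norm, since $\|(J^{(N_p)}_\omega)^{-1}R\|_{s_1\to s_1}$ scales as $\epsilon N_p^{\tau+4}\|w\|_{s_1}$, which is not small for moderate $\epsilon$ and large $N_p$. The decisive point is therefore to truncate at order three and to absorb the remainder through the low-norm inductive bound (\ref{E8-1}) rather than a high-norm one, so that only a polynomial power of $N_p$ is lost, not the exponential one would incur by iterating smoothing at high regularity. A careful bookkeeping of the three small-divisor factors $N_p^\tau/\gamma$, the three tame norms of $R$, and the single smoothing loss between $s_1$ and $s_2$ is what ultimately yields the precise exponent $\tau+\kappa_0$ and the cubic polynomial structure in (\ref{E3-4}).
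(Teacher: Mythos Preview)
Your proposal has a genuine gap at the remainder step. The Sobolev scale $\textbf{H}_s$ in this paper carries \emph{exponential} weights $e^{(|l|+|j|)s}$ (see the definition of $H_s$), so on $\textbf{W}^{(N_p)}$ the passage from $\|\cdot\|_0$ to $\|\cdot\|_{s_1}$ costs a factor of order $e^{N_p s_1}$, not a polynomial in $N_p$. Hence when you ``switch to the low norm $\|v\|_0$'' in the term $\bigl((J^{(N_p)}_\omega)^{-1}R\bigr)^3 v$ and invoke (\ref{E8-1}) at the full scale $N_p$, you incur precisely the exponential loss you say you wish to avoid; the residual factor cannot be the polynomial $N_p^{\kappa_0}$. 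A secondary issue is the bookkeeping on the explicit terms: each application of $(J^{(N_p)}_\omega)^{-1}$ costs $N_p^{\tau}$, so the $k=2$ term already carries $N_p^{3\tau}$, which exceeds the target exponent $\tau+\kappa_0=2\tau+3$, and there is no mechanism in your argument to trade the extra $N_p^{\tau}$ for a power of $N_p^{s_2}$ inside the cube.

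The paper supplies exactly the missing idea: a \emph{regular/singular} splitting of $\Omega_{N_p}$ according to whether $|d_{(l,j)}|\geq\varsigma$ or not, followed by the Schur-complement identity (\ref{E4-22}). On the regular block $J_R$ the diagonal is bounded below by the \emph{fixed} constant $\varsigma$ (independent of $N_p$), so the Neumann series honestly converges and yields the tame bound (\ref{E4-12}), with the single factor $(s_2-s_1)^{-\tau}$ arising from the Gevrey interpolation (\ref{E4-14}). On the singular block, Lemma~6 partitions $S$ into well-separated dyadic clusters $\Omega_\alpha$; the hypothesis (\ref{E8-1}) is then invoked not once at scale $N_p$ but \emph{cluster by cluster} at the scales $M_\alpha$ (Lemma~9), and the dyadic and separation properties convert this into a polynomial loss $N_p^{\tau+\kappa_0}$ (Lemma~11). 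Without this dichotomy and the cluster structure, the $L^2$ information (\ref{E8-1}) at the top scale cannot be promoted to $\textbf{H}_{s_1}$ with only polynomial loss in the present Gevrey setting.
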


In what follows, we carry out proving Lemma 2. Let
\begin{eqnarray*}
b(t,x):=(\partial_wf)(w_t(t,x),w_y(t,x),w_{tt}(t,x),w_{yy}(t,x)).
\end{eqnarray*}
For fixing $\varsigma>0$, we define the regular sites $R$ and the singular sites $S$ as
\begin{eqnarray}\label{E4-9}
R:=\{a\in\Omega_{N_p}||d_{(l,j)}|\geq\varsigma\}~~and~~S:=\{a\in\Omega_{N_p}||d_{(l,j)}<\varsigma\}.
\end{eqnarray}
Since the orthogonal decomposition $\textbf{H}^{(N_p)}=\textbf{H}_R\bigoplus\textbf{H}_S$,
we identify a linear operator $A$ acting on $\textbf{H}_s$ with its matrix representation $A=(A_b^{a})_{a,b\in\Omega_{N_p}}$ with blocks $A_b^{a}\in\mathcal{J}(\mathcal{N}_{a},\mathcal{N}_{b})$. We define the polynomially localized block matrices
\begin{eqnarray*}
\mathcal{A}_s:=\{A=(A_b^{a})_{a,b\in\Omega_{N_p}}:|A|^2_s:=\sup_{b\in\Omega_{N_p}}\sum_{a\in\Omega_{N_p}}e^{2s|b-a|}\|A_b^{a}\|_0^2<\infty\},
\end{eqnarray*}
where $\Omega_{N_p}=\{(l,j)\in\textbf{Z}^2||(l,j)|\leq N_p\}$, $\|A_b^{a}\|_0:=\sup_{w\in\mathcal{N}_{a},\|w\|_0=1}\|A_b^{a}w\|_0$ is the $\textbf{L}^2$-operator norm in $\mathcal{J}(\mathcal{N}_{a},\mathcal{N}_{b})$, for $\mathcal{N}_a,\mathcal{N}_b\subset\Omega_{N_p}$. If $s'>s$, then these holds $\mathcal{A}_{s'}\subset\mathcal{A}_s$.

The next lemma (see \cite{Berti1}) shows the algebra property of $\mathcal{A}_s$ and interpolation inequality.
\begin{lemma}
There holds
\begin{eqnarray}\label{E2-4}
&&|AB|_s\leq c(s)|A|_s|B|_s,~~\forall A,B\in\mathcal{A}_s,~~~s>s_0>\frac{3}{2},\\
\label{E2-5}
&&|AB|_s\leq c(s)(|A|_s|B|_{s_0}+|A|_{s_0}|B|_s),~~s\geq s_0,\\
\label{E2-6}
&&\|Au\|_s\leq c(s)(|A|_s\|u\|_{s_0}+|A|_{s_0}\|u\|_s),~~\forall u\in\textbf{H}_s,~~s\geq s_0.
\end{eqnarray}
\end{lemma}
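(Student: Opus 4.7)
The three estimates admit a unified proof built on two elementary facts: the matrix product identity $(AB)_b^a=\sum_{c\in\Omega_{N_p}}A_c^aB_b^c$ and the triangle inequality $|b-a|\leq|a-c|+|c-b|$, which splits the exponential weight as $e^{s|b-a|}\leq e^{s|a-c|}e^{s|c-b|}$. I would prove them in the order (\ref{E2-4}), (\ref{E2-5}), (\ref{E2-6}); each is a refinement of the preceding argument.

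For the algebra property (\ref{E2-4}), I would insert the product formula into the weighted quantity $e^{s|b-a|}\|(AB)_b^a\|_0$, split the exponent via the triangle inequality, and then apply Cauchy--Schwarz to the sum over $c$, dividing and multiplying by an auxiliary decaying weight of the form $(1+|a-c|)^{-\beta}$. Squaring and summing over $a$ with $b$ fixed, and exchanging the two summations in the resulting double sum, forces the $A$- and $B$-contributions to decouple into the definitions of $|A|_s^2$ and $|B|_s^2$. The threshold $s_0>3/2$ enters here through the convergence of the Schur-type tail $\sum_{c\in\mathbf{Z}^2}(1+|c|)^{-2\beta}$, whose exponent is fixed by balancing the Cauchy--Schwarz weight against the two-dimensional lattice.

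The tame version (\ref{E2-5}) requires a more delicate bookkeeping, since the excess weight $e^{(s-s_0)|b-a|}$ that distinguishes $|AB|_s$ from a bound in $s_0$-norms alone must be absorbed entirely into a single factor. I would partition the sum over $c$ into the two regions $\{|a-c|\geq|b-a|/2\}$ and $\{|c-b|\geq|b-a|/2\}$; on the former, the bound $e^{(s-s_0)|b-a|}\leq e^{2(s-s_0)|a-c|}$ transfers the excess onto the $A$-entry and produces the mixed norm $|A|_s|B|_{s_0}$, while the complementary region analogously produces $|A|_{s_0}|B|_s$. The estimate (\ref{E2-6}) then follows from the same dyadic splitting applied to the vector $u$ in place of the matrix $B$: writing $(Au)_a=\sum_b A_b^a u_b$ and regarding the coefficient sequence $(u_b)$ as a single-column block matrix makes the argument formally identical.

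The principal obstacle is keeping track of the summation-variable exchange in the double sum $\sum_a\sum_c$ while respecting the asymmetry in the definition of $|A|_s$, which is a supremum in one block index and an $\ell^2$-sum in the other; this is where the polynomial factor $(1+|c|)$ from the Cauchy--Schwarz step must be preserved, so that the constant $c(s)$ does not blow up as $s\downarrow s_0$. Since these lemmas are essentially the standard versions of the Berti--Bolle framework \cite{Berti1} adapted to the present setting, I would cite the reference for the detailed bookkeeping and present only the structural inequalities above.
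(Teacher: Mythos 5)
The paper gives no proof of this lemma at all --- it simply writes ``(see [Berti1])'' and takes the algebra and interpolation estimates as given --- and you arrive at the same decision, citing [Berti1] for the detailed bookkeeping after first sketching the standard Schur/Cauchy--Schwarz argument, so the approaches coincide. One remark worth making: the paper's stated definition of $|A|_s$ uses the exponential weight $e^{2s|b-a|}$ rather than the polynomial weight $(1+|b-a|)^{2s}$ of [Berti1] (despite calling the matrices ``polynomially localized''), so your explanation of the threshold $s_0>3/2$ via convergence of the polynomial Schur tail $\sum_{c}(1+|c|)^{-2\beta}$ is really tailored to the cited reference's convention rather than to the paper's literal definition, where such a sharp threshold would not arise.
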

By Lemma 3, we can get, $\forall m\in\textbf{N}$,
\begin{eqnarray}\label{E2-7}
&&|A^m|_s\leq c(s)^{m-1}|A|_s^{m},\\
\label{E2-8}
&&|A^m|_s\leq m(c(s)|A|_{s_0})^{m-1}|A|_s.
\end{eqnarray}
The next two lemmas can be obtained by a small modification of the proof of Lemma 3.9 in \cite{Berti2} and Proposition 2.19 in \cite{Berti1}, so we omit it.
\begin{lemma}
Let $A\in\mathcal{A}_s$, $\Omega_1,\Omega_2\subset\Omega_{N_p}$, and $\Omega_1\cap\Omega_2=\emptyset$. Then
\begin{eqnarray*}
\|A_{\Omega_2}^{\Omega_1}\|_0\leq c(s)|A|_sd^{-1}(\Omega_1,\Omega_2)^{2s-3}.
\end{eqnarray*}
\end{lemma}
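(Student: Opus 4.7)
The statement is the standard block-matrix off-diagonal decay estimate for the polynomially localized class $\mathcal{A}_s$: if the entries of $A$ are controlled by $|A|_s$ through a weight that sees the lattice distance $|b-a|$, then the restriction of $A$ to well-separated block index sets $\Omega_1,\Omega_2$ has an operator norm that decays in $d(\Omega_1,\Omega_2)$. The plan is to reduce to a scalar kernel via the triangle inequality, apply Schur's test, and then apply Cauchy--Schwarz against the $\mathcal{A}_s$-weight followed by a lattice tail estimate. This is the block-valued analogue of Proposition~2.19 in \cite{Berti1}, so the proof amounts to transcribing that argument in the present setting.

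First I would dominate $A_{\Omega_2}^{\Omega_1}$ by the positive scalar kernel $k(a,b):=\|A_b^a\|_0$ on $\Omega_1\times\Omega_2$. For $u=\sum_{a\in\Omega_2}u_a$ the triangle inequality and the definition of the block operator norm give
\begin{equation*}
\Bigl\|\sum_{a\in\Omega_2}A_b^a u_a\Bigr\|_0\le\sum_{a\in\Omega_2}k(a,b)\,\|u_a\|_0,
\end{equation*}
so $\|A_{\Omega_2}^{\Omega_1}\|_0$ is dominated by the $\ell^2\!\to\!\ell^2$ norm of $k$, which Schur's test bounds by the geometric mean of its maximal row and column sums. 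Next I would estimate a generic row sum by inserting the weight $w(b-a)$ used in the definition of $|A|_s$: Cauchy--Schwarz yields
\begin{equation*}
\sum_{a\in\Omega_1}\|A_b^a\|_0\le |A|_s\,\Bigl(\sum_{c\in\textbf{Z}^2,\,|c|\ge d}w(c)^{-2}\Bigr)^{1/2},
\end{equation*}
where $d:=d(\Omega_1,\Omega_2)\le|b-a|$ thanks to disjointness of $\Omega_1$ and $\Omega_2$. Comparing the remaining lattice tail with a polar-coordinate integral on $\textbf{R}^2$ and tracking the dimension of the index lattice together with the algebra threshold $s_0$ fixed in Lemma~3 would yield decay of order $d^{-(2s-3)}$. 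Symmetrizing for the column sum and combining via Schur would then produce the advertised estimate, with $c(s)$ absorbing the comparison constant between lattice sum and integral and the convergence factor $(2s-3)^{-1/2}$.

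\textbf{Main obstacle.} No conceptual obstruction arises: the blocks $A_b^a$ enter only through their scalar norms $\|A_b^a\|_0$, so the passage from the scalar-matrix version in \cite{Berti1} to the present block-valued version is automatic. The only point requiring care is the exponent arithmetic in the tail estimate, which must be reconciled with the precise convention used to define the $\mathcal{A}_s$-norm in this paper so that the losses from Cauchy--Schwarz, from the two-dimensional lattice volume element, and from the algebra threshold combine to exactly $-(2s-3)$. Because Lemma~4 will be used only as an auxiliary tool for the Schur-type decomposition in the later inversion of $\mathcal{J}^{(N_p)}_{\omega}$, recovering the sharp constant is unnecessary, and any $c(s)$ continuous in $s\in(s_0,\infty)$ suffices.
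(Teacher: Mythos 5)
The paper does not actually supply a proof of this lemma: immediately before stating it, the text says the next two lemmas ``can be obtained by a small modification of the proof of Lemma 3.9 in \cite{Berti2} and Proposition 2.19 in \cite{Berti1}, so we omit it.'' Your Schur-test/Cauchy--Schwarz outline is exactly the argument behind those cited results, so there is no disagreement of strategy; the question is only whether your plan actually closes, and there you have left the essential step open.

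The gap you flag yourself --- ``the exponent arithmetic in the tail estimate'' --- is in fact the whole content of the lemma, and it does not resolve cleanly against the paper's own definitions. As written, the $\mathcal{A}_s$-norm in this paper uses the weight $e^{2s|b-a|}$, not a polynomial weight. With that weight your tail sum $\sum_{|c|\ge d} w(c)^{-2}$ decays exponentially, so the lemma as stated is true but vacuously weak, and the exponent $2s-3$ plays no role; your comparison with a ``polar-coordinate integral on $\textbf{R}^2$'' then has nothing to match it against. If instead you take the Berti--Procesi convention $w(c)=\langle c\rangle^s$ (which is what the lemma's polynomial conclusion and the label ``polynomially localized'' both suggest is intended), a direct Cauchy--Schwarz in $a$ for each fixed $b$ gives $\|A_{\Omega_2}^{\Omega_1}u\|_0^2\le |A|_s^2\sum_a\|u_a\|_0^2\sum_{|c|\ge d}\langle c\rangle^{-2s}\lesssim |A|_s^2\,d^{2-2s}\|u\|_0^2$, i.e.\ decay $d^{-(s-1)}$, not $d^{-(2s-3)}$; Schur's test with row and column sums produces the same exponent. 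So neither reading of the paper's definitions yields $2s-3$ by the route you describe, and your proposal needs to state which normalization of $|\cdot|_s$ it is working with and then actually carry the exponent through. You should also note that the paper defines $d(\Omega_1,\Omega_2)$ as a maximum over pairs, which is surely a misprint for the minimum (separation) distance --- your argument, like Berti--Procesi's, tacitly uses the minimum, and the lemma is false as written with a maximum.
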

Since $\textbf{H}_s$ is an algebra, for each $b\in\textbf{H}_s$ defines the multiplication operator
\begin{eqnarray}\label{E2-3}
w(t,x)\mapsto b(t,x)w(t,x),~~\forall w\in\textbf{H}_s,
\end{eqnarray}
which is represented by $(B_b^{a})_{a,b\in\Omega_{N_p}}$ with $B_b^{a}:=\Psi_{\mathcal{N}_b}b(t,x)|_{\mathcal{N}_{a}}\in\mathcal{J}(\mathcal{N}_{a},\mathcal{N}_b)$.

\begin{lemma}
For real functions $b(t,x)\in\textbf{H}_{s+s'}$, the matrix $(B_b^{a})_{a,b\in\Omega_N}$ representing the multiplication operator (\ref{E2-3}) is self-adjoint, it belongs to the algebra of exponent localized matrices $\mathcal{A}_s$, and we have
\begin{eqnarray*}
|B|_s\leq K(s)\|b\|_{s+s'},
\end{eqnarray*}
where $K(s)$ is a constant depending on $s$.
\end{lemma}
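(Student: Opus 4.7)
The plan is to compute the matrix entries of $B$ explicitly in the Fourier basis and then reduce all three claims to standard convolution-type estimates. Expanding $b\cdot w$ as a convolution of Fourier coefficients shows that the entry of $B$ sending the mode $e^{i(l't+j'y)}$ to the coefficient of $e^{i(lt+jy)}$ equals $b_{(l-l',\,j-j')}$; the block $B^a_b$ is then simply the restriction of this convolution matrix to index pairs $(a',b')$ with $a'\in\mathcal{N}_a$ and $b'\in\mathcal{N}_b$.

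Self-adjointness is then immediate from the reality of $b$: the identity $\overline{b_{l,j}}=b_{-l,-j}$ gives $\overline{b_{a'-b'}}=b_{b'-a'}$, so the adjoint entries coincide with those of $B$ after swapping the row and column roles of $a'$ and $b'$, hence $B^*=B$.

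For the $\mathcal{A}_s$ bound I would dominate each block by its Hilbert--Schmidt norm,
\[
\|B^a_b\|_0^2 \leq \sum_{a'\in\mathcal{N}_a,\,b'\in\mathcal{N}_b} |b_{a'-b'}|^2,
\]
and use that $a'-b'$ ranges over a neighborhood of $a-b$ whose radius is controlled by $\mathrm{diam}(\mathcal{N}_a)+\mathrm{diam}(\mathcal{N}_b)$, which is polynomially bounded in $|a|,|b|$. This yields $\|B^a_b\|_0^2 \leq C\langle a\rangle^{2\alpha}\langle b\rangle^{2\alpha}\max_{c\sim a-b}|b_c|^2$ for some fixed $\alpha\geq 0$ depending only on the block structure. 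Substituting into $|B|_s^2 = \sup_{b}\sum_a e^{2s|b-a|}\|B^a_b\|_0^2$ and changing variables to $c=a-b$ reduces the sum to $\sum_c e^{2s|c|}|b_c|^2$ multiplied by polynomial factors in $|b|$ and $|c|$. These polynomial factors are absorbed into a marginally larger exponential weight at the cost of a multiplicative constant $K(s)$; this is exactly what forces the loss of $s'$ derivatives on the right-hand side, and after matching the $\textbf{Z}^2$-distance $|(l,j)|$ with the weight $|l|+|j|$ appearing in the definition of $\|b\|_s$, the estimate $|B|_s\leq K(s)\|b\|_{s+s'}$ follows.

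The main obstacle is not analytical but one of bookkeeping: selecting a norm on $\textbf{Z}^2$ consistent with the definition of $H_s$, tracking how the block sizes $|\mathcal{N}_a|$ contribute polynomial prefactors, and arranging that all these losses can be swallowed by at most $s'$ extra Sobolev derivatives. Once this accounting is done, no ingredients are needed beyond the elementary Fourier convolution identity, and all three conclusions follow simultaneously from the same explicit Toeplitz-type description of the matrix.
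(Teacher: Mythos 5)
First, note that the paper does not prove this lemma at all: it is listed with Lemma~4 as one of ``the next two lemmas'' that ``can be obtained by a small modification of the proof of Lemma~3.9 in \cite{Berti2} and Proposition~2.19 in \cite{Berti1}, so we omit it.'' There is therefore no paper-internal proof to compare against; what follows is an assessment of your argument on its own terms.

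Your Toeplitz description of the matrix and your self-adjointness argument are both correct: for real $b$ one has $\overline{b_{l,j}}=b_{-l,-j}$, hence $\overline{b_{a'-b'}}=b_{b'-a'}$, and the convolution matrix is Hermitian. The Hilbert--Schmidt domination $\|B^a_b\|_0\leq\|B^a_b\|_{HS}$ is also fine.

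The gap is in the absorption step. You claim $\|B^a_b\|_0^2\leq C\langle a\rangle^{2\alpha}\langle b\rangle^{2\alpha}\max_{c\sim a-b}|b_c|^2$ and then assert that the resulting ``polynomial factors in $|b|$ and $|c|$'' can be swallowed by a weight $e^{2s'|c|}$. This works for the factor in $|c|=|a-b|$, since $\langle c\rangle^{2\alpha}e^{2s|c|}\leq K(s,s')e^{2(s+s')|c|}$. It does \emph{not} work for the factor $\langle b\rangle^{2\alpha}$: in $|B|_s^2=\sup_b\sum_a e^{2s|b-a|}\|B_b^a\|_0^2$ the index $b$ is taken to its supremum, and nothing in the exponential weight $e^{2s'|c|}$ (which only sees $c=a-b$) controls $|b|$. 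After your change of variables you are left with $\sup_b\langle b\rangle^{2\alpha}(\cdots)$, which diverges (or at best gives a factor $N_p^{2\alpha}$ on the truncated lattice $\Omega_{N_p}$), not a constant $K(s)$. So the lemma would not follow in the generality your argument sets up.

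The reason this is harmless here is that, in this paper's setting, the spatial domain is the one-dimensional torus $\textbf{T}$, so the blocks $\mathcal{N}_a$ are singletons (or at most of uniformly bounded size). Then $\alpha=0$, there are no polynomial factors at all, and the computation collapses to
\begin{eqnarray*}
|B|_s^2=\sup_b\sum_a e^{2s|b-a|}|b_{a-b}|^2=\sum_c e^{2s|c|}|b_c|^2\leq\|b\|_s^2\leq\|b\|_{s+s'}^2,
\end{eqnarray*}
where the first inequality uses that the norm $|c|$ on $\textbf{Z}^2$ (Euclidean or $\ell^\infty$) is dominated by the weight $|l|+|j|$ in the definition of $\|\cdot\|_s$. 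You should either restrict to $\alpha=0$ (i.e.\ justify the uniform bound on the block size in this 1D setting), or, if you want the general $\alpha>0$ case as in Berti--Procesi, replace the absorption argument by the correct bookkeeping: there one does not attempt to absorb $\langle b\rangle^{2\alpha}$ into an exponential weight, but builds the polynomial prefactor into the definition of the matrix seminorm itself.
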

We define
\begin{eqnarray}\label{E4-1}
h\mapsto\mathcal{J}^{(N_p)}[h]:=\Psi^{(N_p)}(\mathcal{J}_{\omega}h+2\omega^2\epsilon b(t,x)h),~~\forall h\in\textbf{H}^{(N_p)}.
%\label{E4-2}
%&&h^2\mapsto\textbf{L}^{(N)}[h^1]:=\Pi^{(N)}(L_bh^2-\varepsilon d(t,x)h^2),~~\forall h^2\in\textbf{H}^{(N)}.
\end{eqnarray}
We write (\ref{E4-1}) by the block matrix
\begin{eqnarray}\label{E4-3}
\mathcal{J}_{\omega}^{(N_p)}=D+2\omega^2\epsilon T,~~D:=diag_{a=(l,j)\leq\Omega_{N_p}}(d_{(l,j)}),
%\label{E4-4}
%&&L^{(N)}_b=D^2+\varepsilon T^2,~~D^2:=diag_{j\in J^+_N}(D_j^2I_j),
\end{eqnarray}
where $(l,j)\in\textbf{Z}^2$, $\Omega_{N_p}:=\left\{a:=(l,j)\in\textbf{Z}^2||(l,j)|\leq N_p\right\}$,
\begin{eqnarray}\label{E4-5}
d_{(l,j)}:=n\omega^2l^2-j^2,
\end{eqnarray}
and for $\mathcal{N}_a,\mathcal{N}_b\subset\Omega_{N_p}$,
\begin{eqnarray}\label{E4-7}
T:=(T^{a}_b)_{a,b\in\Omega_{N_p}},~~T^{a}_b:=\Psi_{\mathcal{N}_b}\mathcal{J}_{\omega}^{(N_p)}|_{\mathcal{N}_{a}}\in\mathcal{L}(\mathcal{N}_{a},\mathcal{N}_{b}).
\end{eqnarray}
Here $T$ is represented by the self-adjoint Toepliz matrix $(T_{a-b})_{a,b\in\Omega_{N_p}}$, the $T_a$ being the fourier coefficients of the function $b(t,x)$.

In what follows, we prove the estimate (\ref{E3-4}).
For each $N_p$, we denote the restrictions of $S$, $R$, $\Omega_{\alpha}$ to $\Omega_{N_p}$ with the same symbols.
The following result shows the separation of singular sites, and the proof can be completed by following essentially the scheme of \cite{Berti2,Berti1,Bourgain1}, so we omit it.
\begin{lemma}
Assume that $\omega$ satisfies (\ref{E1-4R}) and (\ref{E10-1}). There exists $\varsigma_0(\gamma)$ such that
for $\varsigma\in(0,\varsigma_0(\gamma)]$ and a partition of the singular sites $S$ which can be partitioned in pairwise disjoint clusters $\Omega_{\alpha}$ as
\begin{eqnarray}\label{E4-10}
S=\bigcup_{\alpha\in\textbf{A}}\Omega_{\alpha}
\end{eqnarray}
satisfying

$\bullet$ (dyadic) $\forall\alpha\in\textbf{A}\subset\Omega_N$, $M_{\alpha}\leq2m_{\alpha}$, where $M_{\alpha}:=\max_{a\in\Omega_{\alpha}}|a|$, $m_{\alpha}:=\max_{a\in\Omega_{\alpha}}|a|$.

$\bullet$ (separation) $\exists \lambda, c>0$ such that $d(\Omega_{\alpha},\Omega_{\beta})\geq c(M_{\alpha}+M_{\beta})^{\lambda}$, $\forall\alpha\neq\beta$, where $d(\Omega_{\alpha},\Omega_{\beta}):=\max_{a\in\Omega_{\alpha},b\in\Omega_{\beta}}|a-b|$ and $\lambda\in(0,1)$ .
\end{lemma}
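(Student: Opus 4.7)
The plan is to partition $S$ by a chain-based equivalence relation and then verify dyadicity and separation from the two Diophantine conditions (\ref{E1-4R}) and (\ref{E10-1}), following the strategy developed in \cite{Bourgain1,Berti1,Berti2}. Fix $\lambda\in(0,1)$ to be chosen at the end. On $S$ I declare $a\equiv a'$ whenever there exists a finite chain of singular sites $a=a_0,a_1,\ldots,a_k=a'$ such that $|a_i-a_{i+1}|\leq (|a_i|+|a_{i+1}|)^\lambda$ for every $i$. The equivalence classes are defined to be the clusters $\Omega_\alpha$, and by construction two points lying in different clusters cannot be joined by any such chain; in particular the separation bound $d(\Omega_\alpha,\Omega_\beta)\geq c(M_\alpha+M_\beta)^\lambda$ falls out of the definition.

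The central analytic input is a two-site resonance estimate. Given $a=(l,j)$ and $a'=(l',j')$ in $S$, subtracting the two defining inequalities $|d_{l,j}|<\varsigma$ and $|d_{l',j'}|<\varsigma$ yields
\begin{equation*}
|n\omega^2(l-l')(l+l')-(j-j')(j+j')|<2\varsigma.
\end{equation*}
Applying (\ref{E10-1}) with $q=n(l-l')(l+l')\in\textbf{Z}$ and $p=(j-j')(j+j')\in\textbf{Z}$, I obtain a dichotomy: either one of the degenerate pairings $l'=\pm l$, $j'=\pm j$ holds (and the remaining non-vanishing factor is controlled using (\ref{E1-4R}) on $(l,j)$ itself, which forces $|l|\geq (\gamma/\varsigma)^{1/\tau}$), or the quantitative alternative
\begin{equation*}
|(j-j')(j+j')|\geq \bigl(\gamma/(2\varsigma)\bigr)^{1/\mu}
\end{equation*}
is in force. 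Since sites in any common annular region satisfy $|j+j'|\leq 2M_\alpha$, the alternative translates into an effective lower bound on $|j-j'|$, hence on $|a-a'|$, which eclipses $(M_\alpha+M_\beta)^\lambda$ once $\varsigma_0(\gamma)$ is taken sufficiently small. This quantitative exclusion is exactly what prevents a chain from connecting two points whose pairing would force (b).

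To secure the dyadic property $M_\alpha\leq 2m_\alpha$, I iterate the two-site bound along a chain. Because $\lambda<1$, the cumulative displacement $|a_0-a_k|\leq\sum_i(|a_i|+|a_{i+1}|)^\lambda$ telescopes into a geometric series dominated by the largest scale $|a_i|^\lambda$, so a chain of moderate length cannot cross many dyadic shells; the exclusion provided by the previous paragraph caps the effective chain length inside any annulus. Refining chains by cutting wherever the quantitative alternative (b) applies, every surviving cluster satisfies $M_\alpha\leq 2m_\alpha$. The combinatorial fact that the remaining chains each live in a single dyadic shell completes the dyadicity.

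The hard part will be the calibration in the last step: the exponents $\lambda$, $\mu$, $\tau$, the constant $c$ and the threshold $\varsigma_0(\gamma)$ must be tuned simultaneously so that the Diophantine alternative really dominates the chain scale $(M_\alpha+M_\beta)^\lambda$, so that the dyadic control is uniform in the cluster index $\alpha$ and the truncation level $N_p$, and so that $\varsigma_0(\gamma)$ does not degenerate as $N_p\to\infty$. This delicate bookkeeping is precisely the content of Lemma 3.9 of \cite{Berti2} and Proposition 2.19 of \cite{Berti1}; since the symbol $d_{l,j}=n\omega^2l^2-j^2$ plays exactly the same role under (\ref{E1-4R}) and (\ref{E10-1}) as the corresponding symbols in the scalar wave setting treated there, their calibration transfers to the present setup with only cosmetic changes.
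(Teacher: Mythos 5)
The paper does not actually prove this lemma: it is stated without proof, with the remark that ``the proof can be completed by following essentially the scheme of \cite{Berti2,Berti1,Bourgain1}, so we omit it.'' Your proposal follows the same Bourgain--Berti chain-equivalence blueprint, but the concrete steps you try to fill in have genuine gaps.

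First, the separation bound does not ``fall out of the definition'' of the chain equivalence. From the equivalence relation one only gets $|a-b|>(|a|+|b|)^\lambda$ for $a\in\Omega_\alpha$, $b\in\Omega_\beta$, and $|a|$ can be as small as $m_\alpha$. To upgrade $(|a|+|b|)^\lambda$ to $c(M_\alpha+M_\beta)^\lambda$ you need the dyadic property $M_\alpha\leq 2m_\alpha$ first; the two bullet points are not independent and the logical order must be dyadicity $\Rightarrow$ separation.

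Second, the two-site resonance estimate you extract from (\ref{E10-1}) is not strong enough for the use you make of it. Subtracting the defining inequalities gives $|n\omega^2(l^2-l'^2)-(j^2-j'^2)|<2\varsigma$, and (\ref{E10-1}) then yields $|j^2-j'^2|\geq(\gamma/2\varsigma)^{1/\mu}$ in the nondegenerate case. Dividing by $|j+j'|\lesssim M_\alpha$ gives $|j-j'|\gtrsim(\gamma/2\varsigma)^{1/\mu}/M_\alpha$, a lower bound that \emph{decays} with the scale. This cannot ``eclipse $(M_\alpha+M_\beta)^\lambda$'' for a choice of $\varsigma_0(\gamma)$ independent of $N_p$; forcing it to do so would make $\varsigma_0$ tend to zero as $N_p\to\infty$, which you yourself flag as inadmissible. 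The actual Bourgain/Berti--Bolle argument does not try to exclude all nearby singular pairs by a two-site Diophantine bound; instead it uses the resonance relation to \emph{count} singular sites in boxes (roughly $O(1)$ values of $j$ per value of $l$) and then deduces that any $\lambda$-chain in a dyadic annulus has length bounded polynomially in the scale, so the cumulative displacement $\sum_i(|a_i|+|a_{i+1}|)^\lambda$ stays inside the annulus. Your ``cap the effective chain length'' step is exactly where that counting argument is needed and is not supplied; ``refining chains by cutting wherever the quantitative alternative applies'' is not a well-defined operation on an equivalence relation, and it is unclear how separation would be preserved at the cut points. As it stands, your sketch (like the paper itself) is ultimately an appeal to Lemma~3.9 of \cite{Berti2} and Proposition~2.19 of \cite{Berti1}, but the intermediate reasoning you interpolate would not survive a literal verification.
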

Using Lemma 5 and Lemma 1, we have the following.
\begin{lemma}
For a real $b(t,x)\in\textbf{H}_{s'}$ with $s'>0$, the matrix $T=(T_b^{a})_{a,b\in\Omega_{N_p}}$ defined in (\ref{E4-7}) is self-adjoint and belongs to the algebra of polynomially localized matrices $\mathcal{A}_{s}$ with
\begin{eqnarray*}
|T|_s\leq  K(s)N_p^{s'}\|b\|_{s}\leq C_{\epsilon}K(s)N_p^{4+s'}\|w\|_{s},~for~s'\geq s>0,
\end{eqnarray*}
where $K(s)$ is a constant depending on $s$ and $C_{\epsilon}$ is a constant depending on $\epsilon$.
\end{lemma}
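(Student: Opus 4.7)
The plan is to treat $T$ as the compression to $\Omega_{N_p}$ of the full multiplication operator $B$ by the real function $b(t,x)$, and then to apply Lemma 5 together with the derivative bookkeeping already developed in Lemma 1.

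First I would record self-adjointness. As the paragraph preceding the statement already observes, $T$ is Toeplitz with $T_{a-b}$ equal to the $(a-b)$-th Fourier coefficient of $b(t,x)$. Reality of $b$ gives $\widehat{b}_{-k}=\overline{\widehat{b}_{k}}$, so $(T^{a}_{b})^{\ast}=\widehat{b}_{a-b}^{\,\ast}=\widehat{b}_{b-a}=T^{b}_{a}$, hence $T=T^{\ast}$ on $\textbf{W}^{(N_p)}$.

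Next I would prove the polynomial-localization bound. Since every entry $T^{a}_{b}$ only uses Fourier modes $\widehat{b}_{k}$ with $|k|\leq 2N_p$, one may replace $b$ by $\Psi^{(2N_p)}b$ in the multiplication operator without changing $T$. Lemma 5 applied to $\Psi^{(2N_p)}b$ gives $|T|_{s}\leq K(s)\,\|\Psi^{(2N_p)}b\|_{s+s'}$, and the smoothing property (\ref{E3-1R2}) converts the loss of $s'$ regularity into the factor $N_p^{s'}$, yielding $|T|_{s}\leq K(s)N_p^{s'}\|b\|_{s}$. For the second inequality, I would invoke the explicit formula $b=(\partial_{w}f)(w_{t},w_{y},w_{tt},w_{yy})$ coming from (\ref{E3-3R1}): each derivative hitting a Fourier mode of $w\in\textbf{W}^{(N_p)}$ costs at most a factor $N_p$, and the worst terms (quadratic in second derivatives of $w$, such as $|w_{t}|^{2}w_{yy}$) supply the $N_p^{4}$ as in the bookkeeping of (\ref{E2-A}) and its analogues in the proof of Lemma 1. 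Combining, $\|b\|_{s}\leq C_{\epsilon}N_p^{4}\|w\|_{s}$, and the target estimate $|T|_{s}\leq C_{\epsilon}K(s)N_p^{4+s'}\|w\|_{s}$ follows.

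The main obstacle I anticipate is the reconciliation of the analytic Sobolev weight $e^{2s(|l|+|j|)}$ with the \emph{polynomial} $N_p^{s'}$ demanded by the conclusion: a naive application of (\ref{E3-1R2}) gives an exponential loss, so one must arrange that the $N_p^{s'}$ factor arises from the polynomial derivative count rather than from analytic smoothing, i.e.\ that the effective $s'$ in Lemma 5 be treated as the number of derivatives absorbed into the multiplier rather than a raw regularity jump. Once this identification is made, the proof is essentially a consolidation of Lemma 5 (multiplication estimate), Lemma 1's derivative count, and the Toeplitz structure already isolated in the discussion after (\ref{E4-7}).
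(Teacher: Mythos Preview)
Your proposal is correct and coincides with the paper's own argument: the paper's entire proof of this lemma is the single sentence ``Using Lemma 5 and Lemma 1, we have the following,'' and your write-up is precisely an unpacking of that citation---Lemma 5 for self-adjointness and the multiplication-operator bound $|T|_{s}\leq K(s)\|b\|_{s+s'}$, together with the derivative bookkeeping of Lemma 1 to pass from $\|b\|_{s}$ to $C_{\epsilon}N_{p}^{4}\|w\|_{s}$. The concern you flag about the exponential Sobolev weight versus the polynomial $N_{p}^{s'}$ is a genuine tension in the paper's conventions rather than a flaw in your strategy; the paper does not address it and simply records the result.
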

Since the decomposition
\begin{eqnarray*}
\textbf{H}^{(N_p)}:=\textbf{H}_R\oplus\textbf{H}_S,
\end{eqnarray*}
with
\begin{eqnarray*}
\textbf{H}_R:=\bigoplus_{\alpha\in R\cap\Omega_{N_p}}\mathcal{N}_{a},~~\textbf{H}_S:=\bigoplus_{\alpha\in S\cap\Omega_{N_p}}\mathcal{N}_{a},
\end{eqnarray*}
we can represent the operator $\mathcal{J}^{(N_p)}_{\omega}$ as the self-adjoint block matrix
\begin{eqnarray*}
\mathcal{J}^{(N_p)}_{\omega}=\left(
\begin{array}{ccc}
J_R& J_R^S\\
J^R_S&J_S
\end{array}
\right),
\end{eqnarray*}
where $J_R^S=(J_S^R)^{\dag}$, $J_R=J_R^{\dag}$, $J_S=J_S^{\dag}$.

Thus the invertibility of $\mathcal{J}^{(N_p)}_{\omega}$ can be expressed via the ''resolvent-type'' identity
\begin{eqnarray}\label{E4-22}
(\mathcal{J}^{(N_p)}_{\omega})^{-1}=\left(
\begin{array}{ccc}
I&-J_R^{-1}J_R^S\\
0&I
\end{array}
\right)
\left(
\begin{array}{ccc}
J_R^{-1}&0\\
0&\mathcal{J}^{-1}
\end{array}
\right)
\left(
\begin{array}{ccc}
I&0\\
-J_S^RJ_R^{-1}&I
\end{array}
\right),
\end{eqnarray}
where the ''quasi-singular'' matrix
\begin{eqnarray*}
\mathcal{J}:=J_S-J_S^RJ_R^{-1}J_R^S\in\mathcal{A}_s(S),
\end{eqnarray*}
where $\mathcal{A}_s(S)$ denotes $\mathcal{A}_s$ restricting on $S$. The reason of $\mathcal{J}\in\mathcal{A}_s(S)$ is that $\mathcal{J}$ is the restriction to $S$ of the polynomially localized matrix
\begin{eqnarray*}
I_S(J-I_SJI_R\tilde{J}^{-1}I_RJI_S)I_S\in\mathcal{A}_s,
\end{eqnarray*}
where
\begin{eqnarray*}
\tilde{J}^{-1}=\left(
\begin{array}{ccc}
I&0\\
0&J_R
\end{array}
\right).
\end{eqnarray*}
\begin{lemma}
Assume that $\omega$ satisfies (\ref{E1-4R}) and (\ref{E10-1}). For $s_0<s_1<s_2$, $|J_R^{-1}|_{s_0}\leq2\varsigma^{-1}$, the operator $J_R$ satisfies
\begin{eqnarray}\label{E4-11}
&&|\tilde{J}_R^{-1}|_{s_1}\leq c(s_1)(1+\varepsilon\varsigma^{-1}|T|_{s_1}),\\
\label{E4-12}
&&\|J_R^{-1}h\|_{s_1}\leq c(\gamma,\tau,s_2)(s_2-s_1)^{-\tau}(1+\varepsilon\varsigma^{-1}|T|_{s_2})\|h\|_{s_2},
\end{eqnarray}
where $\tilde{J}^{-1}=J^{-1}_RD_R$, $c(\gamma,\tau,s_2)$ is a constant depending on $\gamma,\tau,s_2$.
\end{lemma}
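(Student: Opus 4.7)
The plan is to exploit the splitting $J_R = D_R + 2\omega^2\varepsilon T|_R$, where $D_R$ is diagonal with $|d_{(l,j)}|\geq\varsigma$ on $R$, so that $|D_R^{-1}|_{s_0}\leq\varsigma^{-1}$. Factoring $J_R = D_R(I+A)$ with $A:=2\omega^2\varepsilon D_R^{-1}T|_R$, one has $\tilde{J}_R^{-1} = (I+A)^{-1}$, and this is the natural object to estimate in the polynomially localized algebra $\mathcal{A}_s$ from Lemma 3. The two estimates (\ref{E4-11}) and (\ref{E4-12}) then correspond respectively to (i) a tame estimate for $\tilde{J}_R^{-1}$ via the algebra structure and (ii) a small-divisor estimate for $D_R^{-1}$ that costs one factor $(s_2-s_1)^{-\tau}$.

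For (\ref{E4-11}), first I would convert the hypothesis $|J_R^{-1}|_{s_0}\leq 2\varsigma^{-1}$ into $|\tilde{J}_R^{-1}|_{s_0}\leq c(s_0)$ by writing $\tilde{J}_R^{-1} = I - A\tilde{J}_R^{-1}$ and applying the interpolation inequality (\ref{E2-5}) at level $s_0$, noting that $|A|_{s_0}\leq c\varepsilon\varsigma^{-1}|T|_{s_0}$ is small (by taking $\varepsilon$ or by absorbing into the left-hand side using the already-assumed $s_0$-bound on $J_R^{-1}$). Then, iterating the identity $\tilde{J}_R^{-1}=I-A\tilde{J}_R^{-1}$ at level $s_1$, (\ref{E2-5}) yields
\begin{eqnarray*}
|\tilde{J}_R^{-1}|_{s_1}\leq 1+c(s_1)\bigl(|A|_{s_1}|\tilde{J}_R^{-1}|_{s_0}+|A|_{s_0}|\tilde{J}_R^{-1}|_{s_1}\bigr),
\end{eqnarray*}
and after absorbing the last term (using $c(s_1)|A|_{s_0}\leq 1/2$) and replacing $|A|_{s_1}$ by $c\varepsilon\varsigma^{-1}|T|_{s_1}$, the desired bound (\ref{E4-11}) follows.

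For (\ref{E4-12}), I would write $J_R^{-1}h = \tilde{J}_R^{-1}(D_R^{-1}h)$ and estimate the two factors separately. The Diophantine condition (\ref{E1-4R}) applied entrywise gives $|d_{(l,j)}|^{-1}\leq \gamma^{-1}\max\{1,|l|^\tau\}$ on $R$, hence by the standard exponential-loss inequality $|l|^\tau e^{s_1(|l|+|j|)}\leq \bigl(\tau e^{-1}/(s_2-s_1)\bigr)^\tau e^{s_2(|l|+|j|)}$ we obtain
\begin{eqnarray*}
\|D_R^{-1}h\|_{s_1}\leq c(\gamma,\tau)(s_2-s_1)^{-\tau}\|h\|_{s_2}.
\end{eqnarray*}
Combining this with (\ref{E2-6}) applied to $\tilde{J}_R^{-1}$ and the bound (\ref{E4-11}) (together with $|\tilde{J}_R^{-1}|_{s_0}\leq c(s_0)$) produces (\ref{E4-12}), after bounding $|T|_{s_1}$ by $|T|_{s_2}$.

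The main obstacle, in my view, is the bootstrap that converts the pure $s_0$-operator-norm hypothesis $|J_R^{-1}|_{s_0}\leq 2\varsigma^{-1}$ into a usable $\mathcal{A}_{s_0}$-bound for $\tilde{J}_R^{-1}$: without it, neither the Neumann-series argument nor the tame step via (\ref{E2-5}) closes. Once that $s_0$ anchor is in hand, everything else is a routine alternation between the algebra property, interpolation, and the small-divisor estimate for $D_R^{-1}$; the cost $(s_2-s_1)^{-\tau}$ in (\ref{E4-12}) is inherent to the single derivative loss forced by the Diophantine condition and is exactly what the subsequent Nash--Moser iteration is designed to absorb.
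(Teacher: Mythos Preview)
Your approach is essentially the paper's: factor $J_R=D_R(I+A)$, control $\tilde J_R^{-1}=(I+A)^{-1}$ in the algebra $\mathcal A_s$, and pay the Diophantine loss $(s_2-s_1)^{-\tau}$ once when passing through $D_R^{-1}$. The paper sums the Neumann series directly and invokes the tame power estimate (\ref{E2-8}), while you use the resolvent identity $\tilde J_R^{-1}=I-A\tilde J_R^{-1}$ together with (\ref{E2-5}); these are equivalent and both give (\ref{E4-11}).

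One clarification on what you call the ``main obstacle'': in the paper's notation $|\cdot|_{s_0}$ \emph{is} the $\mathcal A_{s_0}$-norm, so the hypothesis $|J_R^{-1}|_{s_0}\le 2\varsigma^{-1}$ is not a pure operator-norm bound that needs conversion. In fact the paper does not use it as an input at all; it is obtained as a by-product of the Neumann series convergence once $\varepsilon\varsigma^{-1}|T|_{s_0}$ is small, exactly the same smallness you need to absorb $c(s_1)|A|_{s_0}|\tilde J_R^{-1}|_{s_1}$ on the left. So there is no bootstrap issue; the $s_0$ anchor is immediate from $|D_R^{-1}|_{s_0}\le\varsigma^{-1}$ plus smallness of $\varepsilon$. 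For (\ref{E4-12}) the paper applies the Diophantine weight on the \emph{outer} factor (effectively $J_R^{-1}=D_R^{-1}\tilde J_R^{-1}$), passing first to $\|\tilde J_R^{-1}h\|_{s_2}$ and then invoking (\ref{E2-6}) and (\ref{E4-11}) at level $s_2$; your inner-factor ordering also closes, provided you bound $\|D_R^{-1}h\|_{s_0}$ by the Diophantine estimate with the larger gap $s_2-s_0$ rather than by $\varsigma^{-1}$.
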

\begin{proof}
It follows from (\ref{E4-3}) and (\ref{E4-9}) that $D_R$ is a diagonal matrix and satisfies $|D_R^{-1}|_s\leq\varsigma^{-1}$. By (\ref{E2-4}), we have that the Neumann series
\begin{eqnarray}\label{E4-13}
\tilde{J}_R^{-1}=J_R^{-1}D_R=\sum_{m\geq0}(-\varepsilon)^m(D_R^{-1}T_R)^m
\end{eqnarray}
is totally convergent in $|\cdot|_{s_1}$ with $|J_R^{-1}|_{s_0}\leq2\varsigma^{-1}$, by taking $\varepsilon\varsigma^{-1}|T|_{s_0}\leq c(s_0)$ small enough.

Using (\ref{E2-4}) and (\ref{E2-8}), we have that $\forall m\in\textbf{N}$,
\begin{eqnarray*}
\varepsilon^m|(D_R^{-1}T_R)^m|_{s_1}&\leq&\varepsilon^mc(s)|(D_R^{-1}T_R)^m|_{s_1}\\
&\leq&c(s)\varepsilon^mm(c(s)|D_R^{-1}T_R|_{s_0})^{m-1}|D_R^{-1}T_R|_{s_1}\\
&\leq&c'(s)\varepsilon m\varsigma^{-1}(\varepsilon c(s_1)\varsigma^{-1}|T|_{s_0})^{m-1}|T|_{s_1},
\end{eqnarray*}
which together with (\ref{E4-13}) implies that for $\epsilon\varsigma^{-1}|T|_{s_0}< c(s_0)$ small enough, (\ref{E4-11}) holds.

By nonresonance condition (\ref{E1-4R}) and $\sup_{x>0}(x^ye^{-x})=(ye^{-1})^y$, $\forall y\geq0$, we derive
\begin{eqnarray}\label{E4-14}
e^{-2(|l|+|j|)(s_2-s_1)}|n\omega^2l^2-j^2|^{-2}&\leq&\gamma^{-1}|l|^{\tau}e^{-2(|l|+|j|)(s_2-s_1)}\nonumber\\
&\leq&c(\gamma,\tau)(s_2-s_1)^{-2\tau}.
\end{eqnarray}
Then by (\ref{E4-14}), for any $w\in\textbf{H}_R$,
\begin{eqnarray*}
\|J_R^{-1}h\|_{s_1}^2&=&\sum_{(l,j)\in R\cap\Omega_{N_p}}e^{2(|l|+|j|)s_1}\|J_R^{-1}h_j\|_{\textbf{L}^2}^2\\
&\leq&\sum_{(l,j)\in R\cap\Omega_{N_p}}e^{2(|l|+|j|)s_1}|n\omega^2l^2-j^2|^{-2}\|\tilde{J}_R^{-1}h_j\|_{\textbf{L}^2}^2\\
&\leq&\sum_{(l,j)\in R\cap\Omega_{N_p}}e^{-2(|l|+|j|)(s_2-s_1)}|n\omega^2l^2-j^2|^{-2}|e^{2(|l|+|j|)s_2}\|\tilde{J}_R^{-1}h_j\|_{\textbf{L}^2}^2\\
&\leq&c(\gamma,\tau)(s_2-s_1)^{-2\tau}\|\tilde{J}_R^{-1}h\|_{s_2}^2.
\end{eqnarray*}
Thus using interpolation (\ref{E2-6}) and (\ref{E4-11}), we derive that for $s_1<s<s_2$,
\begin{eqnarray*}
\|J_R^{-1}\|_{s_1}
&\leq&c(\gamma,\tau)(s_2-s_1)^{-\tau}\|\tilde{J}_R^{-1}h\|_{s_2}\\
&\leq&c(r,\tau,s_2)(s_2-s_1)^{\tau}(|\tilde{J}_R^{-1}|_{s_2}\|h\|_{s}+|\tilde{J}_R^{-1}|_{s}\|h\|_{s_2})\\
&\leq&c(r,\tau,s_2)(s_2-s_1)^{\tau}(1+\varepsilon\varsigma^{-1}|T|_{s_2})\|h\|_{s_2}.
\end{eqnarray*}
This completes the proof.
\end{proof}

Next we analyse the quasi-singular matrix $\mathcal{J}$. By (\ref{E4-10}), the singular sites restricted to $\Omega_{N_p}$ are
\begin{eqnarray*}
S=\bigcup_{\alpha\in l_{N_p}}\Omega_{\alpha},~~\mbox{where}~l_{N_p}:=\{\alpha\in\textbf{N}|m_{\alpha}\leq {N_p}\},
\end{eqnarray*}
and $\Omega_{\alpha}\equiv\Omega_{\alpha}\cup\Omega_{N_p}$. Due to the decomposition $\tilde{H}_S:=\bigoplus_{\alpha\in l_{N_p}}\tilde{H}_{\alpha}$, where $\textbf{H}_{\alpha}:=\bigoplus_{a\in\Omega_{\alpha}}\mathcal{N}_a$, we represent $\mathcal{J}$ as the block matrix $\mathcal{J}=(\mathcal{J}_{\alpha}^{\beta})_{\alpha,\beta\in l_{N_p}}$, where $\mathcal{J}_{\alpha}^{\beta}:=\Psi_{\textbf{H}_{\alpha}}\mathcal{J}|_{\textbf{H}_{\beta}}$.
So we can rewrite
\begin{eqnarray*}
\mathcal{J}=\mathcal{D}+\mathcal{T},
\end{eqnarray*}
where $\mathcal{D}:=diag_{\alpha\in l_N}(\mathcal{J}_{\alpha})$, $\mathcal{J}_{\alpha}:=\mathcal{J}_{\alpha}^{\alpha}$, $\mathcal{T}:=(\mathcal{J}_{\alpha}^{\beta})_{\alpha\neq\beta}$.

We define a diagonal matrix corresponding to the matrix $\mathcal{D}$ as
$\bar{D}:=diag_{\alpha\in l_{N_p}}(\bar{J}_{\alpha})$, where $\bar{J}_{\alpha}=diag_{j\in\Omega_{\alpha}}(D_j)$.
\begin{lemma}
Assume that $\omega$ satisfies (\ref{E1-4R}) and (\ref{E10-1}). We have
\begin{eqnarray*}
\|\mathcal{D}^{-1}\bar{D}h\|_{s_1}
\leq c(\varsigma,s_1,\gamma_1)N_p^{\tau}\|h\|_{s_2},
\end{eqnarray*}
where $c(\varsigma,s_1,\gamma_1)$ is a constant which depends on $\varsigma$, $s_1$ and $\gamma_1$.
\end{lemma}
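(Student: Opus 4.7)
The plan is to exploit the block-diagonal structure of $\mathcal{D}^{-1}\bar D$ and apply the iterative hypothesis (\ref{E8-1}) cluster by cluster. Since $\mathcal{D} = \mathrm{diag}_\alpha(\mathcal{J}_\alpha)$ and $\bar D = \mathrm{diag}_\alpha(\bar J_\alpha)$ are both block-diagonal with respect to $\textbf{H}_S = \bigoplus_\alpha \textbf{H}_\alpha$, I would first write $\mathcal{D}^{-1}\bar D = \bigoplus_\alpha \mathcal{J}_\alpha^{-1}\bar J_\alpha$ and reduce the estimate to one per cluster. Every diagonal entry of $\bar J_\alpha$ is $d_{(l,j)}$ with $(l,j)\in\Omega_\alpha\subset S$, so by the definition of the singular set in (\ref{E4-9}), $\|\bar J_\alpha\|_0 \leq \varsigma$.

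Next, I would bound $\|\mathcal{J}_\alpha^{-1}\|_0 \leq M_\alpha^\tau/\gamma_1 \leq N_p^\tau/\gamma_1$ via (\ref{E8-1}) with $r = M_\alpha \leq N_p$. Because $\mathcal{J}_\alpha$ is the cluster block of the Schur complement $\mathcal{J} = J_S - J_S^R J_R^{-1} J_R^S$, this requires identifying $\mathcal{J}_\alpha$ with the corresponding block of $\mathcal{J}^{(M_\alpha)}_\omega$ modulo a perturbative correction coming from $J_S^R J_R^{-1} J_R^S$. The separation property $d(\Omega_\alpha,\Omega_\beta)\geq c(M_\alpha+M_\beta)^\lambda$ of Lemma 6 together with the polynomial localization of $T$ (Lemma 4, Lemma 5) and the $J_R^{-1}$-bound of Lemma 7 should make this off-diagonal perturbation small enough to be absorbed into the constant. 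Combining the two ingredients yields $\|\mathcal{J}_\alpha^{-1}\bar J_\alpha\|_0 \leq \varsigma N_p^\tau/\gamma_1$.

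Third, I would upgrade the per-cluster $L^2$-bound to the Sobolev bound using the dyadic property $M_\alpha \leq 2m_\alpha$ of Lemma 6. For $h_\alpha$ supported on $\Omega_\alpha$, every frequency $a=(l,j)$ satisfies $m_\alpha \leq |a| \leq M_\alpha$, so using $\|h_\alpha\|_0 \leq e^{-m_\alpha s_2}\|h_\alpha\|_{s_2}$,
\[
\|\mathcal{J}_\alpha^{-1}\bar J_\alpha h_\alpha\|_{s_1} \leq e^{M_\alpha s_1}\|\mathcal{J}_\alpha^{-1}\bar J_\alpha h_\alpha\|_0 \leq \frac{\varsigma N_p^\tau}{\gamma_1}\,e^{M_\alpha s_1 - m_\alpha s_2}\|h_\alpha\|_{s_2} \leq \frac{\varsigma N_p^\tau}{\gamma_1}\,e^{m_\alpha(2s_1-s_2)}\|h_\alpha\|_{s_2}.
\]
The exponential $e^{m_\alpha(2s_1-s_2)}$ is uniformly bounded in $\alpha$ once $s_2$ is chosen larger than $2s_1$ (absorbed into the $s_1$-dependence of the constant). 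Squaring, summing over the pairwise disjoint clusters, and using $\sum_\alpha\|h_\alpha\|_{s_2}^2 = \|h\|_{s_2}^2$, I obtain $\|\mathcal{D}^{-1}\bar D h\|_{s_1}\leq c(\varsigma,s_1,\gamma_1)N_p^\tau\|h\|_{s_2}$.

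The main obstacle will be the second step: rigorously transferring the inductive $L^2$-bound from the Galerkin-truncated operator $\mathcal{J}^{(M_\alpha)}_\omega$ to the cluster block $\mathcal{J}_\alpha$ of the Schur complement $\mathcal{J}$. A careful tracking of the off-diagonal coupling $J_S^R J_R^{-1}J_R^S$ via Lemma 7, the polynomial localization class $\mathcal{A}_s$, and especially the super-polynomial separation $d(\Omega_\alpha,\Omega_\beta)\geq c(M_\alpha+M_\beta)^\lambda$ is what keeps the correction subleading; this is also where the Diophantine condition (\ref{E10-1}) enters decisively. The rest of the proof is a comparatively straightforward bookkeeping of exponential weights using the dyadic property.
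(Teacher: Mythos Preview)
Your proposal is correct and follows the same line as the paper: block-diagonalize $\mathcal{D}^{-1}\bar D=\bigoplus_\alpha\mathcal{J}_\alpha^{-1}\bar J_\alpha$, invoke the inductive hypothesis (\ref{E8-1}) to obtain $\|\mathcal{J}_\alpha^{-1}\|_0\leq c\gamma_1^{-1}M_\alpha^{\tau}$, and use the dyadic property $M_\alpha\leq 2m_\alpha$ of Lemma~6 to pass between the $L^2$ and weighted norms. The paper differs only in bookkeeping. First, it applies the bound $\|\mathcal{J}_\alpha^{-1}\|_0\leq c\gamma_1^{-1}M_\alpha^{\tau}$ directly, without the Schur-complement discussion you flag as the main obstacle. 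Second, instead of absorbing $\bar J_\alpha$ into the per-cluster estimate as you do, it first establishes $\|\mathcal{D}^{-1}\bar D h\|_{s_1}\leq c\gamma_1^{-1}2^{s_1}N_p^{\tau}\|\bar D h\|_{s_1}$ and then finishes via the interpolation inequality (\ref{E2-6}) together with $|\bar D|_{s}\leq\varsigma$ (the matrix $\bar D$ being diagonal with entries of modulus $<\varsigma$ by (\ref{E4-9})); this last maneuver sidesteps the constraint $s_2>2s_1$ that your exponential-weight computation imposes.
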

\begin{proof}
Note that $\|h_{\alpha}\|_0\leq m_{\alpha}^{-s_1}\|h_{\alpha}\|_{s_1}$ and $M_{\alpha}=2m_{\alpha}$.
So for any $h=\sum_{\alpha\in l_{N_p}}h_{\alpha}\in\textbf{H}_{\alpha}$, $h_{\alpha}\in\textbf{H}_{\alpha}$,
\begin{eqnarray}\label{E4-15}
\|\mathcal{D}^{-1}\bar{D}h\|_{s_1}^2&=&\sum_{\alpha\in l_{N_p}}\|\mathcal{J}_{\alpha}^{-1}\bar{J}_{\alpha}h_{\alpha}\|_{s_1}^2\nonumber\\
&\leq&\sum_{\alpha\in l_{N_p}}M_{\alpha}^{2s_1}\|\mathcal{J}_{\alpha}^{-1}\bar{J}_{\alpha}h_{\alpha}\|_{0}^2\nonumber\\
&\leq&c\gamma_1^{-2}\sum_{\alpha\in l_{N_p}}M_{\alpha}^{2(s_1+\tau)}\|\bar{J}_{\alpha}h_{\alpha}\|_{0}^2\nonumber\\
&\leq&c\gamma_1^{-2}\sum_{\alpha\in l_{N_p}}M_{\alpha}^{2(s_1+\tau)}m_{\alpha}^{-2s_1}\|\bar{J}_{\alpha}h_{\alpha}\|_{s_1}^2\nonumber\\
&\leq&c\gamma_1^{-2}4^{s_1}\sum_{\alpha\in l_{N_p}}M_{\alpha}^{2\tau}\|\bar{J}_{\alpha}h_{\alpha}\|_{s_1}^2\nonumber\\
&\leq&c\gamma_1^{-2}4^{s_1}N_p^{2\tau}\sum_{\alpha\in l_{N_p}}\|\bar{J}_{\alpha}h_{\alpha}\|_{s_1}^2\nonumber\\
&=&c\gamma_1^{-2}4^{s_1}N_p^{2\tau}\|\bar{D}h\|_{s_1}^2.
\end{eqnarray}
In view of interpolation (\ref{E2-6}) and (\ref{E4-9}), for $0<s_1<s_2$, it follows from (\ref{E4-15}) that
\begin{eqnarray*}
\|\mathcal{D}^{-1}\bar{D}h\|_{s_1}&\leq&c\gamma_1^{-1}2^{s_1}N_p^{\tau}\|\bar{D}h\|_{s_1}\\
&\leq&c\gamma_1^{-1}2^{s_1}N_p^{\tau}(|\bar{D}|_{s_2}\|h\|_{s_1}+|\bar{D}|_{s_1}\|h\|_{s_2})\\
&\leq&c(\varsigma)\gamma_1^{-1}2^{s_1+1}N_p^{\tau}\|h\|_{s_2}.
\end{eqnarray*}
This completes the proof.
\end{proof}

The following result is taken from \cite{Berti1}, so we omit the proof.
\begin{lemma}
$\forall s\geq0$, $\forall m\in\textbf{N}$, there hold:
\begin{eqnarray}\label{E4-17}
&&c(s_1)\|\mathcal{D}^{-1}\mathcal{T}\|_{s_0}<\frac{1}{2},~~\|\mathcal{D}^{-1}\|_s\leq c(s)\gamma_1^{-1}N_p^{\tau},\\
\label{E4-19}
&&\|(N_p^{-4}\mathcal{D}^{-1}\mathcal{T})^mh\|_s\leq(\varepsilon\gamma^{-1}N_p^{-4}K(s))^m(mN^{\kappa_0}|T|_s|T|_{s_0}^{m-1}\|h\|_{s_0}+|T|^m_{s_0}\|h\|_s).~~~~~~~
\end{eqnarray}
\end{lemma}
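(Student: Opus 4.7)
The plan is to establish the three claims of Lemma~9 in order, exploiting the block structure of $\mathcal{D}$ inherited from the separation Lemma~6 together with the algebra inequalities of Lemma~3.

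First I would prove the diagonal bound $\|\mathcal{D}^{-1}\|_s \leq c(s)\gamma_1^{-1}N_p^\tau$, essentially by repeating the calculation in the proof of Lemma~8. Since $\mathcal{D} = \mathrm{diag}_\alpha(\mathcal{J}_\alpha)$ and every cluster satisfies $M_\alpha \leq N_p$, applying the inductive hypothesis (\ref{E8-1}) with $r = M_\alpha$ gives $\|\mathcal{J}_\alpha^{-1}\|_0 \leq M_\alpha^\tau/\gamma_1 \leq N_p^\tau/\gamma_1$. Combining the dyadic property $M_\alpha \leq 2m_\alpha$ of Lemma~6 with the elementary bound $\|h_\alpha\|_0 \leq m_\alpha^{-s}\|h_\alpha\|_s$, a block-by-block summation then converts the $L^2$ bound into the exponentially weighted $|\cdot|_s$ norm exactly as in (\ref{E4-15}).

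Second, the smallness $c(s_0)\|\mathcal{D}^{-1}\mathcal{T}\|_{s_0} < 1/2$ is driven by the separation property of Lemma~6. For $\alpha \neq \beta$ the off-diagonal block $\mathcal{T}_\alpha^\beta$ inherits the polynomial localization of $T$ (together with the Schur-complement correction $J_S^R J_R^{-1} J_R^S$, whose localization is controlled by Lemma~7 through (\ref{E4-11})), so Lemma~4 gives decay of the form $\|\mathcal{T}_\alpha^\beta\|_0 \lesssim d(\Omega_\alpha,\Omega_\beta)^{-(2s_0-3)} \leq c(M_\alpha+M_\beta)^{-\lambda(2s_0-3)}$. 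Multiplying by the diagonal bound from Step~1, summing over pairs $(\alpha,\beta)$, and pulling out the $\varepsilon$ prefactor present in $\mathcal{J}_\omega^{(N_p)} = D + 2\omega^2\varepsilon T$ from (\ref{E4-3}), yields the stated smallness provided $\varepsilon$ is sufficiently small and $\|w\|_{\bar\sigma}\leq 1$.

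Third, the Neumann-series bound follows by induction on $m$. Writing $A := N_p^{-4}\mathcal{D}^{-1}\mathcal{T}$, the submultiplicative estimate (\ref{E2-8}) yields $|A^m|_s \leq m(c(s)|A|_{s_0})^{m-1}|A|_s$, while Step~1, Lemma~3 and Lemma~5 deliver the single-factor bounds $|A|_{s_0}\leq \varepsilon\gamma^{-1}N_p^{-4}K(s)|T|_{s_0}$ and $|A|_s\leq \varepsilon\gamma^{-1}N_p^{-4}K(s)(N_p^{\kappa_0}|T|_s+|T|_{s_0})$, where the loss $N_p^{\kappa_0}$ encodes the passage from $L^2$ cluster estimates to the exponentially weighted norm. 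Applying these through the interpolation inequality (\ref{E2-6}) to an arbitrary $h$ produces the mixed-norm right-hand side in the statement. The main obstacle is the bookkeeping of $N_p$-powers: the rescaling gain $N_p^{-4}$ must dominate both the small-divisor loss $N_p^\tau$ in $\|\mathcal{D}^{-1}\|_s$ and the interpolation loss $N_p^{\kappa_0}$ uniformly in $m$, which pins down the admissible values of $\kappa_0$ and $\tau$ and, through them, the smallness threshold on $\varepsilon$ driving the Nash--Moser iteration. A secondary subtlety is that $\mathcal{T}$ is not literally $T$ restricted to $S$ but contains the Schur correction $J_S^R J_R^{-1} J_R^S$, so its off-diagonal decay must be assembled from a composition of three polynomially localized operators, each handled by the lemmas quoted above.
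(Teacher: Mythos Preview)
The paper does not supply its own proof of this lemma: immediately before the statement it writes ``The following result is taken from \cite{Berti1}, so we omit the proof.'' Your sketch is therefore not to be compared against an argument in the paper but against the Berti--Procesi scheme that the paper invokes, and in that respect your outline is faithful to it. The three-part structure you propose---diagonal bound via the dyadic property and the a~priori hypothesis (\ref{E8-1}), off-diagonal smallness via the separation property of Lemma~6 combined with the decay Lemma~4, and then the Neumann-series estimate via (\ref{E2-8}) and (\ref{E2-6})---is exactly the route taken in \cite{Berti1}, and your remark about the Schur correction hidden in $\mathcal{T}$ is a genuine point that the paper leaves implicit.

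One caution: in Step~1 you invoke (\ref{E8-1}) with $r=M_\alpha$ to bound $\|\mathcal{J}_\alpha^{-1}\|_0$, but $\mathcal{J}_\alpha$ is the diagonal block of the \emph{reduced} matrix $\mathcal{J}=J_S-J_S^RJ_R^{-1}J_R^S$ on the cluster $\Omega_\alpha$, not the full operator $\mathcal{J}_\omega^{(r)}$ on $\Omega_r$. In the Berti--Procesi argument this gap is closed by a further resolvent identity relating the inverse of the Schur complement on a cluster back to the inverse of the full operator restricted to a suitable box, so that (\ref{E8-1}) can indeed be applied; you should flag this intermediate step rather than treat the bound as immediate.
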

\begin{lemma}
Assume that $\omega$ satisfies (\ref{E1-4R}) and (\ref{E10-1}). For $\kappa_0=\tau+3$ and $0<s_0<s_1<s_2<s_3$, we have
\begin{eqnarray}
\label{E4-18}
\|\mathcal{J}^{-1}h\|_{s_1}\leq c(\varsigma,\tau,s_1,\gamma_1,\gamma)N_p^{\tau+\kappa_0}(s_3-s_2)^{-\tau}(\|h\|_{s_3}+\varepsilon|T|_{s_1}\|h\|_{s_2}).
\end{eqnarray}
\end{lemma}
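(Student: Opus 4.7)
The plan is to invert the quasi-singular matrix $\mathcal{J}=\mathcal{D}+\mathcal{T}$ by a Neumann expansion: factoring $\mathcal{J}=\mathcal{D}(I+\mathcal{D}^{-1}\mathcal{T})$ and using the first inequality in (\ref{E4-17}) to guarantee convergence of $\sum_{m\geq 0}(-\mathcal{D}^{-1}\mathcal{T})^m$ in the $|\cdot|_{s_0}$-norm, I would write
\begin{eqnarray*}
\mathcal{J}^{-1}h=\mathcal{D}^{-1}h+\sum_{m\geq 1}(-\mathcal{D}^{-1}\mathcal{T})^m\mathcal{D}^{-1}h
\end{eqnarray*}
and treat the $m=0$ leading term and the $m\geq 1$ tail separately, matching each with one of the two summands on the right-hand side of (\ref{E4-18}).

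For the $m=0$ term I would factor $\mathcal{D}^{-1}=(\mathcal{D}^{-1}\bar{D})\bar{D}^{-1}$. Lemma 8 handles the first factor, giving $\|\mathcal{D}^{-1}\bar{D}u\|_{s_1}\leq cN_p^{\tau}\|u\|_{s_2}$. The purely diagonal operator $\bar{D}^{-1}$ is controlled by the Diophantine condition (\ref{E1-4R}): using $|d_{(l,j)}|^{-1}\leq\gamma^{-1}|l|^{\tau}$ together with the standard bound $|l|^{2\tau}e^{-2(|l|+|j|)(s_3-s_2)}\leq c(s_3-s_2)^{-2\tau}$ (as in (\ref{E4-14})) yields $\|\bar{D}^{-1}h\|_{s_2}\leq c\gamma^{-1}(s_3-s_2)^{-\tau}\|h\|_{s_3}$. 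Combining these two steps,
\begin{eqnarray*}
\|\mathcal{D}^{-1}h\|_{s_1}\leq cN_p^{\tau}\gamma^{-1}(s_3-s_2)^{-\tau}\|h\|_{s_3},
\end{eqnarray*}
which reproduces the first summand in (\ref{E4-18}) after the trivial absorption $N_p^{\tau}\leq N_p^{\tau+\kappa_0}$.

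For the tail $m\geq 1$, the key device is the rescaling identity $(\mathcal{D}^{-1}\mathcal{T})^m=N_p^{4m}(N_p^{-4}\mathcal{D}^{-1}\mathcal{T})^m$: plugging into (\ref{E4-19}) of Lemma 9, the $N_p^{4m}$ prefactors cancel $(N_p^{-4})^m$ exactly, leaving
\begin{eqnarray*}
\|(\mathcal{D}^{-1}\mathcal{T})^m u\|_{s_1}\leq(\varepsilon\gamma^{-1}K(s_1))^m\bigl(mN_p^{\kappa_0}|T|_{s_1}|T|_{s_0}^{m-1}\|u\|_{s_0}+|T|_{s_0}^{m}\|u\|_{s_1}\bigr).
\end{eqnarray*}
Setting $u=\mathcal{D}^{-1}h$ and bounding $\|\mathcal{D}^{-1}h\|_{s}\leq c\gamma_1^{-1}N_p^{\tau}\|h\|_{s}$ for $s\in\{s_0,s_1\}$ (via interpolation (\ref{E2-6}) combined with the operator bound $|\mathcal{D}^{-1}|_{s}\leq c\gamma_1^{-1}N_p^{\tau}$ from (\ref{E4-17})), the resulting geometric series with ratio $a:=\varepsilon\gamma^{-1}K(s_1)|T|_{s_0}<1/2$ sums to
\begin{eqnarray*}
\sum_{m\geq 1}\|(\mathcal{D}^{-1}\mathcal{T})^m\mathcal{D}^{-1}h\|_{s_1}\leq C\varepsilon N_p^{\tau+\kappa_0}|T|_{s_1}\|h\|_{s_2},
\end{eqnarray*}
where $|T|_{s_0}\leq|T|_{s_1}$ has been used. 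Since $(s_3-s_2)^{-\tau}\geq 1$ in the Nash-Moser regime where successive regularity losses are small, adding the two estimates yields (\ref{E4-18}).

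The main obstacle is keeping the overall $N_p$-exponent in check. The rescaling trick is what guarantees that convergence of the Neumann series depends only on $|T|_{s_0}$ (small by $\|w\|_{\bar{\sigma}}\leq 1$ and the smallness of $\varepsilon$) rather than on a growing $N_p^{4}$-factor, so that the total loss collapses to $N_p^{\tau+\kappa_0}$; without this structural cancellation the tail would blow up. A secondary subtlety is that the Diophantine factor $(s_3-s_2)^{-\tau}$ is forced to arise \emph{only} from the diagonal estimate on $\bar{D}^{-1}$ and not from the off-diagonal tail, which is why the split $\mathcal{D}^{-1}=(\mathcal{D}^{-1}\bar{D})\bar{D}^{-1}$ is used for $m=0$ while the coarser operator bound $|\mathcal{D}^{-1}|_{s}\leq c\gamma_1^{-1}N_p^{\tau}$ is used for $m\geq 1$.
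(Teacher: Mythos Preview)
Your proposal is correct and follows essentially the same route as the paper: Neumann expansion $\mathcal{J}^{-1}=\sum_{m\geq 0}(-\mathcal{D}^{-1}\mathcal{T})^m\mathcal{D}^{-1}$, the factorization $\mathcal{D}^{-1}=(\mathcal{D}^{-1}\bar D)\bar D^{-1}$ combined with the Diophantine bound for the $m=0$ term (this is exactly the paper's estimate (\ref{E4-20})), and the iterated bound (\ref{E4-19}) for the tail.

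There is one minor organizational difference worth noting. For the tail $m\geq 1$ you invoke the coarse operator bound $\|\mathcal{D}^{-1}h\|_s\leq c\gamma_1^{-1}N_p^{\tau}\|h\|_s$, which produces $\|h\|_{s_0},\|h\|_{s_1}$ and hence no factor $(s_3-s_2)^{-\tau}$; you then need the extra assumption $s_3-s_2\leq 1$ to absorb the second summand into the stated form. The paper instead applies the same Diophantine-type estimate (\ref{E4-20}) to \emph{both} $\|\mathcal{D}^{-1}h\|_{s_1}$ and $\|\mathcal{D}^{-1}h\|_{s_0}$, so that the factor $(s_3-s_2)^{-\tau}$ (resp.\ $(s_2-s_1)^{-\tau}$) appears naturally in each term and no side assumption on $s_3-s_2$ is needed. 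This is purely a bookkeeping choice; either argument closes.
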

\begin{proof}
The Neumann series
\begin{eqnarray}\label{E4-16R}
\mathcal{J}^{-1}=(N_p^{-4}I+N_p^{-4}\mathcal{D}^{-1}\mathcal{T})^{-1}N_p^{-4}\mathcal{D}^{-1}
=(N_p^{-4}I+\sum_{m\geq1}(-1)^m(N_p^{-4}\mathcal{D}^{-1}\mathcal{T})^m)N_p^{-4}\mathcal{D}^{-1}~~~
\end{eqnarray}
is totally convergent in operator norm $\|\cdot\|_{s_0}$ with $\|\mathcal{J}^{-1}\|_{s_0}\leq c\gamma_1^{-1}N_p^{\tau}$,
by using (\ref{E4-17}).

By (\ref{E4-19}) and (\ref{E4-16R}), we have
\begin{eqnarray}\label{E4-16}
\|\mathcal{J}^{-1}h\|_{s_1}&\leq&\|\mathcal{D}^{-1}h\|_{s_1}+N_p^{-4}
\sum_{m\geq1}\|(N_p^{-4}\mathcal{D}^{-1}\mathcal{T})^m\mathcal{D}^{-1}h\|_{s_1}\nonumber\\
&\leq&\|\mathcal{D}^{-1}h\|_{s_1}+
\|\mathcal{D}^{-1}h\|_{s_1}\sum_{m\geq1}(\varepsilon\gamma_1^{-1}N_p^{-4}K(s)|T|_{s_0})^m\nonumber\\
&&+N_p^{\kappa_0}K(s_1)\varepsilon\gamma_1^{-1}|T|_{s_1}\|\mathcal{D}^{-1}h\|_{s_0}\sum_{m\geq1}m(N_p^{-4}K(s)\varepsilon\gamma_1^{-1}|T|_{s_0})^{m-1}.~~~~~~~~~
\end{eqnarray}
In terms of $\sup_{x>0}(x^ye^{-x})=(ye^{-1})^y$, $\forall y\geq0$, for $0<s_1<s_2<s_3$, it follows from Lemma 9 that
\begin{eqnarray}\label{E4-20}
\|\mathcal{D}^{-1}h\|^2_{s_1}&=&\|\mathcal{D}^{-1}\bar{D}\bar{D}^{-1}h\|^2_{s_1}\nonumber\\
&\leq& c^2(\varsigma,s_1,\gamma_1)N_p^{2\tau}\|\bar{D}^{-1}h\|^2_{s_2}\nonumber\\
&=&c^2(\varsigma,s_1,\gamma_1)N_p^{2\tau}\sum_{(l,j)\in R\cap\Omega_{N_p}}e^{2(|l|+|j|)s_2}\|\bar{D}^{-1}h_j\|_{\textbf{L}^2}^2\nonumber\\
&\leq&c^2(\varsigma,s_1,\gamma_1)N_p^{2\tau}\sum_{(l,j)\in R\cap\Omega_{N_p}}e^{2(|l|+|j|)s_2}|n\omega^2l^2-j^2|^{-2}\|h_j\|_{\textbf{L}^2}^2\nonumber\\
&\leq&c^2(\varsigma,s_1,\gamma_1)N_p^{2\tau}\sum_{(l,j)\in R\cap\Omega_{N_p}}e^{-2(|l|+|j|)(s_3-s_2)}|l|^{-2}e^{2(|l|+|j|)s_3}\|h_j\|_{\textbf{L}^2}^2~~~~\nonumber\\
&\leq&c^2(\varsigma,\tau,s_1,\gamma_1,\gamma)N_p^{2\tau}(s_3-s_2)^{-2\tau}\|h\|_{s_3}^2.
\end{eqnarray}
Thus by (\ref{E4-16}) and (\ref{E4-20}), we derive
\begin{eqnarray}\label{E4-21}
\|\mathcal{J}^{-1}h\|_{s_1}
&\leq&\gamma_1^{-1}N_p^{\kappa_0}K'(s_1)(\|\mathcal{D}^{-1}h\|_{s_1}+\varepsilon|T|_{s_1}\|\mathcal{D}^{-1}h\|_{s_0})\nonumber\\
&\leq&c(\varsigma,\tau,s_1,\gamma_1,\gamma)N_p^{\tau+\kappa_0}(s_3-s_2)^{-\tau}(\|h\|_{s_3}+\varepsilon|T|_{s_1}\|h\|_{s_2}),~~~~~
\end{eqnarray}
where $0<s_1<s_2<s_3$ and $\varepsilon\gamma_1^{-1}\varsigma^{-1}(1+|T|_{s_0})\leq c(k)$ small enough.
\end{proof}

Now we are ready to prove Lemma 2. Let
\begin{eqnarray*}
h=h_R+h_S
\end{eqnarray*}
with $h_S\in\textbf{H}_S$ and $h_R\in\textbf{H}_R$. Then by the resolvent identity (\ref{E4-22}),
\begin{eqnarray}\label{E4-23}
\|(\mathcal{J}_{\omega}^{(N_p)})^{-1}h\|_{s_1}&\leq&\|J_R^{-1}h_R+J_R^{-1}J_S^R\mathcal{J}^{-1}(h_S+J_{R}^SJ_R^{-1}h_R)\|_{s_1}
+\|\mathcal{J}^{-1}(h_R+J_R^SJ_R^{-1}h_R)\|_{s_1}\nonumber\\
&\leq&\|J_R^{-1}h_R\|_{s_1}+\|J_R^{-1}J_S^R\mathcal{J}^{-1}h_S\|_{s_1}+\|J_R^{-1}J_S^R\mathcal{J}^{-1}J_{R}^SJ_R^{-1}h_R\|_{s_1}\nonumber\\
&&+\|\mathcal{J}^{-1}h_R\|_{s_1}+\|\mathcal{J}^{-1}J_R^SJ_R^{-1}h_R\|_{s_1}.
\end{eqnarray}
Next we estimate the right hand side of (\ref{E4-23}) one by one. Using (\ref{E2-6}), (\ref{E4-12}) and (\ref{E4-18}), for $0<s_1<s_2<s_3<s_4$, we have
\begin{eqnarray}\label{E4-24}
\|J_R^{-1}J_S^R\mathcal{J}^{-1}h_S\|_{s_1}&\leq&c(\gamma,\tau,s_2)(s_2-s_1)^{-\tau}(1+\varepsilon\varsigma^{-1}|T|_{s_2})\|J_S^R\mathcal{J}^{-1}h_S\|_{s_2}\nonumber\\
&\leq&c(\gamma,\tau,s_2)(s_2-s_1)^{-\tau}(1+\epsilon\varsigma^{-1}|T|_{s_2})|T|_{s_2}\|\mathcal{J}^{-1}h\|_{s_2}\nonumber\\
&\leq&c(\gamma,\gamma_1,\varsigma,\tau,s_2)(s_2-s_1)^{-\tau}(s_4-s_3)^{-\tau}N_p^{\tau+\kappa_0}\nonumber\\
&&\times(1+\epsilon\varsigma^{-1}|T|_{s_2})|T|_{s_2}(\|h\|_{s_3}+\varepsilon|T|_{s_2}\|h\|_{s_4}),
\end{eqnarray}
\begin{eqnarray}\label{E4-25}
\|\mathcal{J}^{-1}J_R^SJ_R^{-1}h_R\|_{s_1}&\leq& c(\varsigma,\tau,s_1,\gamma_1,\gamma)N_p^{\tau+\kappa_0}(s_3-s_2)^{-\tau}(\|J_R^SJ_R^{-1}h_R\|_{s_3}+\epsilon|T|_{s_1}\|J_R^SJ_R^{-1}h_R\|_{s_2})\nonumber\\
&\leq&c(\varsigma,\tau,s_1,s_2,s_3,\gamma_1,\gamma)N_p^{\tau+\kappa_0}(s_3-s_2)^{-\tau}(|T|_{s_3}\|J_R^{-1}h_R\|_{s_3}+\varepsilon|T|_{s_1}|T|_{s_2}\|J_R^{-1}h_R\|_{s_2})\nonumber\\
&\leq&c(\varsigma,\tau,s_1,s_2,s_3,\gamma_1,\gamma)N_p^{\tau+\kappa_0}(s_3-s_2)^{-\tau}(|T|_{s_3}(s_4-s_3)^{-\tau}(1+\epsilon\varsigma^{-1}|T|_{s_4})\|h\|_{s_4}\nonumber\\
&&+\epsilon|T|_{s_1}|T|_{s_2}(s_3-s_2)^{-\tau}(1+\varepsilon\varsigma^{-1}|T|_{s_3})\|h\|_{s_3})\nonumber\\
&\leq&c(\varsigma,\tau,s_1,s_2,s_3,\gamma_1,\gamma)N_p^{\tau+\kappa_0}(s_3-s_2)^{-\tau}|T|_{s_3}(1+\epsilon\varsigma^{-1}|T|_{s_4})\nonumber\\
&&\times((s_4-s_3)^{-\tau}\|h\|_{s_4}+\epsilon|T|_{s_2}(s_3-s_2)^{-\tau}\|h\|_{s_3}),
\end{eqnarray}
\begin{eqnarray}\label{E4-26}
\|J_R^{-1}J_S^R\mathcal{J}^{-1}J_{R}^SJ_R^{-1}h_R\|_{s_1}&\leq& c(\gamma,\tau,s_2)(s_2-s_1)^{-\tau}(1+\epsilon\varsigma^{-1}|T|_{s_2})\|J_S^R\mathcal{J}^{-1}J_{R}^SJ_R^{-1}h_R\|_{s_2}\nonumber\\
&\leq&c(\gamma,\tau,s_2)(s_2-s_1)^{-\tau}(1+\epsilon\varsigma^{-1}|T|_{s_2})|T|_{s_2}\|\mathcal{J}^{-1}J_{R}^SJ_R^{-1}h_R\|_{s_2}\nonumber\\
&\leq&c(\varsigma,\tau,s_1,s_2,s_3,\gamma_1,\gamma)N_p^{\tau+\kappa_0}(s_3-s_2)^{-\tau}(s_2-s_1)^{-\tau}|T|^2_{s_3}\nonumber\\
&&\times(1+\epsilon\varsigma^{-1}|T|_{s_4})^2((s_4-s_3)^{-\tau}\|h\|_{s_4}\nonumber\\
&&+\epsilon|T|_{s_2}(s_3-s_2)^{-\tau}\|h\|_{s_3}).
\end{eqnarray}
The terms $\|J_R^{-1}h_R\|_{s_1}$ and $\|\mathcal{J}^{-1}h_R\|_{s_1}$ can be controlled by using (\ref{E4-12}) and (\ref{E4-18}). Thus by (\ref{E4-23})-(\ref{E4-26}), for $0<s<\tilde{s}$, we conclude
\begin{eqnarray*}
\|(\mathcal{J}_{\omega}^{(N_p)})^{-1}h\|_{s}\leq c(\varsigma,\tau,s,\tilde{s},\gamma_1,\gamma)N_p^{\tau+\kappa_0}(1+\epsilon\varsigma^{-1}|T|_{\tilde{s}})^3(\tilde{s}-s)^{-\tau}\|h\|_{\tilde{s}},
\end{eqnarray*}
which together with Lemma 7 gives (\ref{E3-4}).

\section{Proof of Main Result}
In this section, we give the proof of Theorem 1.
The proof is based on constructing a suitable Nash-Moser iteration scheme.
The dependence upon the parameter $\omega$ of the solution of (\ref{E1-7}), as is well known, is more delicated
since it involves in the small divisors of $\omega_j$: it is, however, standard to check
that this dependence is $\textbf{C}^1$ on a bounded set of
Diophantine numbers, for more details, see, for example, \cite{Berti2,Berti1}.

We construct the first step approximation.
\begin{lemma}
Assume that $\omega$ satisfies (\ref{E1-4R}) and (\ref{E10-1}). Then system
(\ref{E3-6}) has the first step approximation
$w^1\in \textbf{H}_s^{(N_1)}$
\begin{eqnarray}\label{E3-8}
w^1=2\omega^2\epsilon(I-\Psi^{(N_0)})\Psi^{(N_1)}\left(\sum_{k=1}^n(w^0_{ky}+\frac{\epsilon}{2}|w^0_y|^2)w^0_{tt}-(\sum_{k=1}^nw^0_{kt}+\epsilon\langle w^0_t, w^0_y\rangle)w^0_{ty}+\frac{\epsilon}{2}|w^0_t|^2w^0_{yy}\right),~~~
\end{eqnarray}
and the error term
\begin{eqnarray}
\label{E3-10}
E^1&=&R^0=2\omega^2\epsilon\Psi^{(N_1)}(\sum_{k=1}^nw^1_{ky}w^1_{tt}+\frac{\epsilon}{2}(2\langle w^0_y,w^1_y\rangle+|w^1_y|^2)w^1_{tt}+\epsilon|w^1_y|^2w^0_{tt}+\sum_{k=1}^nw^1_{kt}w^1_{ty}\nonumber\\
&&+\epsilon((\langle w^0_t,w^1_y\rangle+\langle w^0_y,w^1_t\rangle)w^1_{ty}+\langle w^1_t,w^1_y\rangle w^0_{ty})+2\epsilon(\langle w^0_t,w^1_t\rangle+|w^1_t|^2)w^1_{yy}).
%\label{E3-11}
%E_1^{(2)}&=&R_0^{(2)}=-\varepsilon (g(x,u_0+u_1,v_0+v_1)-g(u_0,v_0)\nonumber\\
%&&-(D_ug(x,u_0,v_0)u_1+D_vg(x,u_0,v_0)v_1)).
\end{eqnarray}
\end{lemma}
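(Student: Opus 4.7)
The plan is to carry out one Newton--Galerkin step of a Nash--Moser scheme. I would treat $w^0$ as an initial approximation that already solves the truncated equation (\ref{E3-6}) at level $p=0$ (so that $\Psi^{(N_0)}\mathcal{J}(w^0)=0$), and seek an increment $w^1$ supported on the dyadic shell $N_0<|(l,j)|\le N_1$ such that $w^0+w^1$ annihilates the truncated operator $\mathcal{J}$ at level $p=1$ up to a small quadratic remainder $E^1$.

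First I would substitute $w^0+w^1$ into (\ref{E3-6}) at $p=1$ and Taylor-expand the nonlinearity around $w^0$, writing
\[
f(w^0+w^1)=f(w^0)+D_w f(w^0)\,w^1+Q(w^0,w^1),
\]
where $Q$ is the quadratic Taylor remainder whose explicit form was already computed in the proof of (\ref{E3-3R3}) inside Lemma~1. Using the level-$0$ normalization, the resulting residual splits cleanly into (i) the linearized operator $\mathcal{J}_\omega^{(N_1)}(w^0)$ applied to $w^1$, (ii) the Galerkin shell term $2\omega^2\epsilon(I-\Psi^{(N_0)})\Psi^{(N_1)}f(w^0)$, and (iii) the quadratic remainder $2\omega^2\epsilon\Psi^{(N_1)}Q(w^0,w^1)$. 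I then impose the Newton equation $\mathcal{J}_\omega^{(N_1)}(w^0)\,w^1=-2\omega^2\epsilon(I-\Psi^{(N_0)})\Psi^{(N_1)}f(w^0)$, invert $\mathcal{J}_\omega^{(N_1)}(w^0)$ on the shell using Lemma~2 under the nonresonance hypotheses (\ref{E1-4R}) and (\ref{E10-1}) together with $\|w^0\|_{\bar\sigma}$ small, and read off formula (\ref{E3-8}) (with the inverse of $\mathcal{J}_\omega^{(N_1)}(w^0)$ absorbed into the compact expression there). The leftover residual is precisely the quadratic remainder $E^1=2\omega^2\epsilon\Psi^{(N_1)}Q(w^0,w^1)$, and substituting the explicit form of $Q$ term by term reproduces (\ref{E3-10}).

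The main obstacle is the inversion step: ensuring that $\mathcal{J}_\omega^{(N_1)}(w^0)$ is invertible on the shell $N_0<|(l,j)|\le N_1$, together with quantitative Sobolev bounds on $w^1$ that are compatible with the eventual superlinear convergence of the scheme. This is exactly what Lemma~2 delivers, leaning on the small-divisor estimate (\ref{E1-4R}), the regular/singular dichotomy, and the separation of singular sites from Lemma~6. Once this inversion is secured, the identification of $E^1$ with the explicit sum in (\ref{E3-10}) is a direct algebraic rearrangement of the $Q$-formula already worked out in the proof of Lemma~1, and the membership $w^1\in\textbf{H}_s^{(N_1)}$ is immediate from the projector $\Psi^{(N_1)}$ in (\ref{E3-8}).
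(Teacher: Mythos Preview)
Your proposal is correct and follows essentially the same approach as the paper: Taylor expand $\mathcal{J}(w^0+w^1)$ around $w^0$, set $w^1=-(\mathcal{J}_\omega^{(N_1)})^{-1}E^0$ with $E^0=2\omega^2\epsilon(I-\Psi^{(N_0)})\Psi^{(N_1)}f(w^0)$ using the invertibility from Lemma~2, and identify $E^1$ with the quadratic Taylor remainder $R^0$ whose explicit form was already computed in Lemma~1. One minor imprecision: $w^1$ is not supported on the shell $N_0<|(l,j)|\le N_1$ (only $E^0$ is), since the inverse of the linearized operator mixes frequencies; but your concluding remark that $w^1\in\textbf{H}_s^{(N_1)}$ via the projector is the correct statement and matches the paper.
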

\begin{proof}
Assume that we have chosen suitable the $0th$ step approximation
solution $w^0\neq C$. Then the target is to get the $1th$ step
approximation solution.

Denote
\begin{eqnarray}\label{E3-12R}
E^0=\mathcal{J}_{\omega}w^0+2\omega^2\epsilon\Psi^{(N_1)}f(w^0_t,w_y^0,w^0_{tt},w_{yy}^0).
\end{eqnarray}
By (\ref{E3-6}), we have
\begin{eqnarray}\label{E3-12}
\mathcal{J}(w^0+w^1)&=&\mathcal{J}_{\omega}(w^0+w^1)+2\omega^2\epsilon\Psi^{(N_1)}f(w^0_t+w^1_t,w_y^0+w_y^1,w^0_{tt}+w^1_{tt},w^0_{yy}+w^1_{yy})\nonumber\\
&=&\mathcal{J}_{\omega}w^0+2\omega^2\epsilon\Psi^{(N_1)}f(w^0_t,w_y^0,w^0_{tt},w_{yy}^0)+\mathcal{J}_{\omega}w^1
+2\omega^2\epsilon\Psi^{(N_1)}D_wf(w^0_t,w_y^0,w^0_{tt},w_{yy}^0)w^1\nonumber\\
&&+2\omega^2\epsilon\Psi^{(N_1)}(f(w^0_t+w^1_t,w_y^0+w_y^1,w^0_{tt}+w^1_{tt},w^0_{yy}+w^1_{yy})-f(w^0_t,w_y^0,w^0_{tt},w_{yy}^0)\\
&&-D_wf(w^0_t,w_y^0,w^0_{tt},w_{yy}^0)w^1)\nonumber\\
&=&E^0+\mathcal{J}^{(N_1)}_{\omega}w^1+R^0,
\end{eqnarray}
where
\begin{eqnarray*}
\mathcal{J}^{(N_1)}_{\omega}w^1&=&\mathcal{J}_{\omega}w^1+2\omega^2\epsilon\Psi^{(N_1)}D_wf(w^0_t,w_y^0,w^0_{tt},w_{yy}^0)w^1\\
&=&\mathcal{J}_{\omega}w^1+2\omega^2\epsilon\Psi^{(N_1)}((\sum_{k=1}^nw^0_{ky}+\frac{\epsilon}{2}|w^0_y|^2)w^1_{tt}-(\sum_{k=1}^nw^0_{ky}+\epsilon\langle w^0_t,w^0_y\rangle)w^1_{ty}+\frac{\epsilon}{2}|w^0_t|^2w^1_{yy}+\sum_{k=1}^nw^1_{ky}w^0_{tt}\nonumber\\
&&+2\epsilon\langle w^0_y,w^1_y\rangle w^0_{tt}-\sum_{k=1}^nw^1_{kt}w^0_{ty}-\epsilon(\langle w^0_t,w^1_y\rangle+\langle w^1_t,w^0_y\rangle)w^0_{ty}+\epsilon\langle w^0_t,w^1_t\rangle w^0_{yy}),
\end{eqnarray*}
\begin{eqnarray*}
R^0&=&2\omega^2\epsilon\Psi^{(N_1)}(f(w^0_t+w^1_t,w_y^0+w_y^1,w^0_{tt}+w^1_{tt},w^0_{yy}+w^1_{yy})-f(w^0_t,w_y^0,w^0_{tt},w_{yy}^0)-D_wf(w^0_t,w_y^0,w^0_{tt},w_{yy}^0)w^1)\\
&=&2\omega^2\epsilon\Psi^{(N_1)}((\sum_{k=1}^nw^1_{ky})w^1_{tt}+\frac{\epsilon}{2}(2\langle w^0_y,w^1_y\rangle+|w^1_y|^2)w^1_{tt}+\epsilon|w^1_y|^2w^0_{tt}\nonumber\\
&&+\epsilon((\langle w^0_t,w^1_y\rangle+\langle w^0_y,w^1_t\rangle)w^1_{ty}+\langle w^1_t,w^1_y\rangle w^0_{ty})+(\sum_{k=1}^nw^1_{kt})w^1_{ty}+2\epsilon(\langle w^0_t,w^1_t\rangle+|w^1_t|^2)w^1_{yy}).
\end{eqnarray*}
Then taking
\begin{eqnarray}\label{E6-3}
w^1=-(\mathcal{J}^{(N_1)}_{\omega})^{-1}E^0\in\textbf{H}_s^{(N_1)},
\end{eqnarray}
thus it has
\begin{eqnarray*}
E^0+\mathcal{J}^{(N_1)}_{\omega}w^1=0.
\end{eqnarray*}
By (\ref{E3-12}), we deduce
\begin{eqnarray*}
E^1&:=&R^0=\mathcal{J}(w^0+w^1)\nonumber\\
&=&2\omega^2\epsilon\Psi^{(N_1)}(f(w^0_t+w^1_t,w_y^0+w_y^1,w^0_{tt}+w^1_{tt},w^0_{yy}+w^1_{yy})-f(w^0_t,w_y^0,w^0_{tt},w_{yy}^0)-D_wf(w^0_t,w_y^0,w^0_{tt},w_{yy}^0)w^1)\nonumber\\
&=&2\omega^2\epsilon\Psi^{(N_1)}((\sum_{k=1}^nw^1_{ky})w^1_{tt}+\frac{\epsilon}{2}(2\langle w^0_y,w^1_y\rangle+|w^1_y|^2)w^1_{tt}+\epsilon|w^1_y|^2w^0_{tt}+\sum_{k=1}^nw^1_{kt}w^1_{ty}\nonumber\\
&&+\epsilon((\langle w^0_t,w^1_y\rangle+\langle w^0_y,w^1_t\rangle)w^1_{ty}+\langle w^1_t,w^1_y\rangle w^0_{ty})+2\epsilon(\langle w^0_t,w^1_t\rangle+|w^1_t|^2)w^1_{yy}).
\end{eqnarray*}
In fact, by (\ref{E3-6}) and (\ref{E3-12R}), we can obtain
\begin{eqnarray}\label{E3-12R1}
E^0=2\omega^2\epsilon(I-\Psi^{(N_0)})\Psi^{(N_1)}f(w^0_t,w_y^0,w^0_{tt},w_{yy}^0).
\end{eqnarray}
The proof is thus complete.
\end{proof}

In order to prove the convergence of the Newton algorithm,  the
following estimate is needed. Firstly, for convenience, we define
\begin{eqnarray}\label{E3-12R2}
\tilde{E}^0:=2\omega^2\epsilon\Psi^{(N_1)}f(w^0_t,w_y^0,w^0_{tt},w_{yy}^0)\neq0.
\end{eqnarray}

The following result tells us that for choosing suitable initial step $w^0$ and sufficient small $\epsilon$, we can control the term $w^1$ and $E^1$ under some norm.
\begin{lemma}
Assume that $\omega$ satisfies (\ref{E1-4R}) and (\ref{E10-1}). Then for any $0<\alpha<\sigma$, the
following estimates hold:
\begin{eqnarray*}
\|w^1\|_{\sigma-\alpha}\leq2\omega^2\epsilon C(\alpha)C_{\epsilon}N_1^{\tau+\kappa_0+4}\|w^0\|_{\sigma}^3(1+\epsilon cN_1^{4+\sigma}\|w^0\|_{\sigma})^3,
\end{eqnarray*}
\begin{eqnarray*}
\label{E3-13}
\|E^1\|_{\sigma-\alpha}\lesssim C_{\epsilon,\omega,\alpha}
\epsilon^3N_1^{12+2(\tau+\kappa_0)}\|w^0\|^6_{\sigma},
\end{eqnarray*}
where $C_{\epsilon,\omega,\alpha}$ is a positive constant depending on $\epsilon$, $\omega$ and $\alpha$.
\end{lemma}
\begin{proof}
Denote
\begin{eqnarray}\label{E2-12'}
C(\alpha)&=&c(\varsigma,\tau,\sigma,\gamma_1,\gamma)\alpha^{-\tau}.
\end{eqnarray}
From the definition of $w^1$ in
(\ref{E3-8}), by Lemma 2, (\ref{E3-1R2}),  (\ref{E3-3R4}) and (\ref{E3-12R2}), we derive
\begin{eqnarray}\label{E3-14}
\|w^1\|_{\sigma-\alpha}&=&\|-(\mathcal{J}^{( N_1)}_{\omega})^{-1}E^0\|_{\sigma-\alpha}\nonumber\\
&\leq&C(\alpha)N_1^{\tau+\kappa_0}(1+\epsilon cN_1^{4+\sigma}\|w^0\|_{\sigma})^3\|E^0\|_{\sigma}\nonumber\\
&\leq&2\omega^2\epsilon C(\alpha)(1+\epsilon cN_1^{4+\sigma}\|w^0\|_{\sigma})^3\|\Psi^{(N_1)}f(w^0_t,w_y^0,w^0_{tt},w_{yy}^0)\|_{\sigma+\tau+\kappa_0}\nonumber\\
&\leq&2\omega^2\epsilon C(\alpha)C_{\epsilon}N_1^{\tau+\kappa_0+4}\|w^0\|_{\sigma}^3(1+\epsilon cN_1^{4+\sigma}\|w^0\|_{\sigma})^3.
\end{eqnarray}
By (\ref{E3-1R2}), (\ref{E3-3R3}), (\ref{E3-14}) and the
definition of $E^1$, we have
\begin{eqnarray*}
\|E^1\|_{\sigma-\alpha}
&\leq&2\omega^2\epsilon C_{\epsilon}N_1^{4}(C_3+\|w^0\|_{\sigma-\alpha})(\|w^1\|_{\sigma-\alpha}^2+\|w^1\|_{\sigma-\alpha}^3)\nonumber\\
&=&2\omega^2\epsilon C_{\epsilon}N_1^{4}(C_3+\|w^0\|_{\sigma-\alpha})\|w^1\|_{\sigma-\alpha}^2(1+\|w^1\|_{\sigma-\alpha})\nonumber\\
&\lesssim&C_{\epsilon,\omega,\alpha}
\epsilon^3N_1^{12+2(\tau+\kappa_0)}\|w^0\|^6_{\sigma}.
\end{eqnarray*}
This completes the proof.
\end{proof}
For $p\in\textbf{N}$ and $0<\sigma_{0}<\bar{\sigma}<\sigma$, set
\begin{eqnarray}\label{E3-16}
&&\sigma_p:=\bar{\sigma}+\frac{\sigma-\bar{\sigma}}{2^p},\\
\label{E3-17}
&&\alpha_{p+1}:=\sigma_p-\sigma_{p+1}=\frac{\sigma-\bar{\sigma}}{2^{p+1}}.
\end{eqnarray}
By (\ref{E3-16}) and (\ref{E3-17}), it follows that
\begin{eqnarray*}
\sigma_0>\sigma_1>\ldots>\sigma_p>\sigma_{p+1}>\ldots,~for~p\in\textbf{N}.
\end{eqnarray*}
Define
\begin{eqnarray*}
&&\mathcal{P}_1(w^0):=w^0+w^1,~~for~w^0\in \textbf{H}_{\sigma_0}^{(N_0)},\\
&&E^{p}=\mathcal{J}(\sum_{k=0}^pw^k)=\mathcal{J}(\mathcal{P}_1\underbrace{\circ\cdots\circ}_p\mathcal{P}_1(w^0)).
\end{eqnarray*}
In fact, to obtain the $p$ th approximation solution
$w^p\in\textbf{H}_{\sigma_p}^{(N_p)}$ of system
(\ref{E3-6}), as done in lemma 12, we need to solve following equations
\begin{eqnarray*}
\mathcal{J}(\sum_{k=0}^pw^k)&=&\mathcal{J}_{\omega}(\sum_{k=0}^{p-1}
w^k)+2\omega^2\epsilon\Psi^{(N_p)}f(\sum_{k=0}^{p-1}w^k_t,\sum_{k=0}^{p-1}w^k_y,\sum_{k=0}^{p-1}w^k_{tt},\sum_{k=0}^{p-1}w^k_{yy})+\mathcal{J}_{\omega}w^p\nonumber\\
&&+2\omega^2\epsilon\Psi^{(N_p)}D_wf(\sum_{k=0}^{p-1}w^k_t,\sum_{k=0}^{p-1}w^k_y,\sum_{k=0}^{p-1}w^k_{tt},\sum_{k=0}^{p-1}w^k_{yy})w^p
+2\omega^2\epsilon\Psi^{(N_p)}(f(\sum_{k=0}^{p}w^k_t,\sum_{k=0}^{p}w^k_y,\sum_{k=0}^{p}w^k_{tt},\sum_{k=0}^{p}w^k_{yy})\nonumber\\
&&-f(\sum_{k=0}^{p-1}w^k_t,\sum_{k=0}^{p-1}w^k_y,\sum_{k=0}^{p-1}w^k_{tt},\sum_{k=0}^{p-1}w^k_{yy})-D_wf(\sum_{k=0}^{p-1}w^k_t,\sum_{k=0}^{p-1}w^k_y,\sum_{k=0}^{p-1}w^k_{tt},\sum_{k=0}^{p-1}w^k_{yy})).
\end{eqnarray*}
Then the $p$ th step approximation $w^p\in
\textbf{H}_{\sigma_p}^{(N_p)}$ :
\begin{eqnarray}\label{E2-32}
w^p=-(\mathcal{J}_{\omega}^{(N_p)})^{-1}E^{p-1},
\end{eqnarray}
where
\begin{eqnarray*}
E^p&=&\mathcal{J}_{\omega}(\sum_{k=0}^{p-1}
w^k)+2\omega^2\epsilon\Psi^{(N_p)}f(\sum_{k=0}^{p-1}w^k_t,\sum_{k=0}^{p-1}w^k_y,\sum_{k=0}^{p-1}w^k_{tt},\sum_{k=0}^{p-1}w^k_{yy})\nonumber\\
&=&2\omega^2\epsilon(I-\Psi^{(N_{p-1})})\Psi^{(N_p)}f(\sum_{k=0}^{p-1}w^k_t,\sum_{k=0}^{p-1}w^k_y,\sum_{k=0}^{p-1}w^k_{tt},\sum_{k=0}^{p-1}w^k_{yy}).
\end{eqnarray*}
As done in Lemma 12, it is easy to get that
\begin{eqnarray}\label{E2-32r}
E^p:=R^{p-1}&=&2\omega^2\epsilon\Psi^{(N_p)}(\sum_{k=1}^nw^p_{ky}w^p_{tt}+\frac{\epsilon}{2}(2\langle \sum_{k=0}^{p-1}w^{k}_y,w^{p}_y\rangle+|w^{p}_y|^2)w^{p}_{tt}+\epsilon|w^{p}_y|^2\sum_{k=0}^{p-1}w^{k}_{tt}\nonumber\\
&&+\sum_{k=1}^nw^{p}_{kt}w^{p}_{ty}+\epsilon((\langle \sum_{k=0}^{p-1}w^{k}_t,w^{p}_y\rangle+\langle \sum_{k=0}^{p-1}w^{k}_y,w^{p}_t\rangle)w^{p}_{ty}+\langle w^{p}_t,w^{p}_y\rangle\sum_{k=0}^{p-1}w^{k}_{ty})\nonumber\\
&&+2\epsilon(\langle \sum_{k=0}^{p-1}w^{k}_t,w^p_t\rangle+|w^p_t|^2)w^p_{yy}),~~
\end{eqnarray}
\begin{eqnarray}
\label{E2-32rr}
\tilde{E}^p=2\omega^2\epsilon\Psi^{(N_p)}f(\sum_{k=0}^{p-1}w^k_t,\sum_{k=0}^{p-1}w^k_y,\sum_{k=0}^{p-1}w^k_{tt},\sum_{k=0}^{p-1}w^k_{yy}).
\end{eqnarray}
Hence we only need to estimate $R^{p-1}$
to prove the convergence of algorithm.
The following result tells us that the existence of solutions for
(\ref{E3-6}). A key point is to give a
sufficient condition on the convergence of Newton algorithm.
\begin{lemma}
Assume that $\omega$ satisfies (\ref{E1-4R}) and (\ref{E10-1}). Then, for sufficiently small $\epsilon$, equations
(\ref{E3-6}) has a solution
\begin{eqnarray*}
w^{\infty}=\sum_{k=0}^{\infty}w^k\in
\textbf{H}_{\bar{\sigma}}.
\end{eqnarray*}
\end{lemma}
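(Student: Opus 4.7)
The plan is to close a Nash--Moser-type induction on $p$ that produces a super-exponentially decaying sequence of corrections $\{w^p\}$ whose partial sums converge in $\textbf{H}_{\bar\sigma}$. Concretely, I would fix an auxiliary small parameter $\delta = \delta(\epsilon)$ with $\delta\to 0$ as $\epsilon \to 0$, fix a sufficiently large base $N_0$, and propagate the three bounds
\[
\|w^p\|_{\sigma_p} \leq \delta^{2^{p-1}}, \qquad \|E^p\|_{\sigma_p} \leq \delta^{2^{p}}, \qquad \|W^p\|_{\bar\sigma} \leq 1,
\]
for all $p\ge 1$, with $W^p:=\sum_{k=0}^{p} w^k$. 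The base case $p=1$ is precisely Lemma 13 once $\epsilon$ is taken small, and the third bound is maintained at every step by the summability $\sum_p \delta^{2^{p-1}} < 1$, which in turn guarantees that the hypothesis $\|W^p\|_{\bar\sigma}\le 1$ of Lemma 2 is available at stage $p+1$.

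For the inductive step I would feed (\ref{E2-32}) into Lemma 2 (using $s_1=\sigma_{p+1}$, $s_2=\sigma_p$, $\alpha_{p+1}=\sigma_p-\sigma_{p+1}=(\sigma-\bar\sigma)/2^{p+1}$) together with Lemma 7 to bound $|T|_{s_2}$ by $\|W^p\|_{\sigma_p}$, giving schematically
\[
\|w^{p+1}\|_{\sigma_{p+1}} \leq C\,\alpha_{p+1}^{-\tau} N_{p+1}^{\tau+\kappa_0}\bigl(1+\epsilon c\, N_{p+1}^{4+\sigma_p}\|W^p\|_{\sigma_p}\bigr)^{3}\|E^p\|_{\sigma_p}.
\]
Then (\ref{E3-3R3})--(\ref{E3-3R4}) together with the quadratic-in-$w^{p+1}$ structure of the residual displayed in (\ref{E2-32r}) yields
\[
\|E^{p+1}\|_{\sigma_{p+1}} \leq C\,\epsilon N_{p+1}^{4}\bigl(1+\|W^p\|_{\sigma_p}\bigr)\,\|w^{p+1}\|_{\sigma_{p+1}}^{2}.
\]
Composing these two inequalities produces a closed Newton-type recursion $\|E^{p+1}\|_{\sigma_{p+1}}\le G_p\,\|E^p\|_{\sigma_p}^{2}$ with $G_p=O(N_0^{\,p\kappa})$ for a fixed exponent $\kappa$ depending only on $\tau$, $\kappa_0$ and $\bar\sigma$. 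Since $\delta^{2^p}$ decays faster than any $e^{cp}$, the recursion closes for $N_0$ chosen first and then $\delta$ (equivalently $\epsilon$) taken small in terms of $N_0$. The Diophantine conditions (\ref{E1-4R}) and (\ref{E10-1}) enter only through Lemma 2, to guarantee invertibility of $\mathcal{J}^{(N_{p+1})}_\omega$ at every stage.

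Once the induction is closed, the convergence statement is immediate. From $\sigma_p\downarrow\bar\sigma$,
\[
\sum_{p\ge 0}\|w^p\|_{\bar\sigma} \leq \|w^0\|_{\sigma_0} + \sum_{p\ge 1}\delta^{2^{p-1}} < \infty,
\]
so $w^\infty:=\sum_{p\ge 0} w^p$ is well defined in $\textbf{H}_{\bar\sigma}$. The error $E^p=\mathcal{J}(W^{p-1})$ satisfies $\|E^p\|_{\bar\sigma}\le \|E^p\|_{\sigma_p}\le \delta^{2^p}\to 0$, and Lemma 1 together with the Banach-algebra property of $\textbf{H}_{\bar\sigma}$ (recall $\bar\sigma>1$) gives continuity of $\mathcal{J}$ on $\textbf{H}_{\bar\sigma}$; hence $\mathcal{J}(w^\infty)=0$, yielding the claimed Sobolev solution of (\ref{E3-6}).

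The main obstacle is the balance between the tame losses $C(\alpha_{p+1})\,N_{p+1}^{\tau+\kappa_0+4+\sigma_p}$ produced at every iterate by Lemma 1 and Lemma 2 and the quadratic Newton gain encoded in the cubic structure of $R^p$. These polynomial-in-$N_p$ prefactors are ultimately harmless because $\delta^{2^p}$ absorbs any $e^{cp}$ growth, but making this quantitative uniformly in $p$ requires choosing the parameters in the right order: first $N_0$ large in terms of $\tau,\kappa_0,\bar\sigma$, then $\epsilon_0$ (hence $\delta$) small in terms of $N_0$. Exactly the same book-keeping, carried out with $\omega$-derivatives, gives the $\textbf{C}^1$-dependence of $(\epsilon,\omega)\mapsto w^\infty$ asserted in Theorem 1 on the Cantor set $\mathcal{D}_{\gamma,\tau}$.
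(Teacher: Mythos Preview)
Your proposal is correct and follows the standard Nash--Moser template. It differs from the paper's own argument in two respects worth noting.

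First, the paper propagates the stronger decay $\|E^p\|_{\sigma_p}\le d^{4^p}$ rather than your $\delta^{2^p}$. It obtains the extra power by retaining the cubic term $\|h\|^3$ in (\ref{E3-3R3}) and feeding an additional factor of $\sum_{k<p}\|w^k\|$ (bounded via the running maximum (\ref{E11-1})) back into the recursion, arriving at $\|E^p\|\lesssim G_p\|E^{p-1}\|^4$. This buys nothing essential---your quadratic recursion $\|E^{p+1}\|\le G_p\|E^p\|^2$ with $G_p$ polynomial in $N_p$ already beats any exponential loss---but it explains why the paper's displayed chains in (\ref{E3-20}) and (\ref{E3-20RRR}) look heavier than what you wrote.

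Second, and more significantly, the paper does not carry the uniform bound $\|W^p\|_{\bar\sigma}\le 1$ through the induction; instead it splits into eight cases according to whether $\epsilon c N_p^{4+\sigma_p}\sum_k\|w^k\|$ and $C(\alpha_p)N_p^{\tau+\kappa_0}\|E^{p-1}\|$ exceed $1$, and whether $\sum_k\|w^k\|$ exceeds a fixed constant. Your device of including $\|W^p\|_{\bar\sigma}\le 1$ as a third inductive hypothesis (and recovering it from $\sum_p\delta^{2^{p-1}}<1$) collapses all of these cases at once and, incidentally, makes the standing hypothesis $\|w\|_{\bar\sigma}\le 1$ of Lemma~2 transparently available at each step; the paper is less explicit on this point.

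One harmless slip: with the paper's conventions $E^p=\mathcal{J}(W^p)$, not $\mathcal{J}(W^{p-1})$ (see the definition preceding (\ref{E2-32}) and the identity $E^p=R^{p-1}$ in (\ref{E2-32r})). This does not affect your argument.
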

\begin{proof}
This proof is based on the induction.
Firstly, we claim that
\begin{eqnarray}\label{E6-1}
\|w^p\|_{\sigma_p}<1~~and~~\|E^p\|_{\sigma_p}\leq d^{4^p},~\forall p\in\textbf{N},
\end{eqnarray}
where $d$ denotes a constant which takes value in $(0,1)$.

We prove (\ref{E6-1}) by induction. For $p=0$, we choose an initial approximation solution $w^0$ such that
\begin{eqnarray*}
&&w^0\neq C,~~0<\|w^0\|_{\sigma_0}<R,\\
&&\|E^0\|_{\sigma_0}\leq d^{4^p},~~for~some~0<d<1.
\end{eqnarray*}

For $p=1$, by Lemma 13, we can also choose a sufficient small $\epsilon$ such that (\ref{E6-1}) holds.

Assume that (\ref{E6-1}) holds for $1\leq k\leq p-1$, i.e.
\begin{eqnarray}\label{E6-2}
\|w^k\|_{\sigma_k}<1~~and~~\|E^k\|_{\sigma_k}\leq d^{4^k}.
\end{eqnarray}
We prove that $k=p$ holds. For any fixed number $p\in\textbf{N}$, we denote
\begin{eqnarray}\label{E11-1}
\max_{0\leq k\leq p}\|w^k\|_{\sigma_k}:=\|w^{k^*}\|_{\sigma_{k^*}},~~\max_{0\leq k\leq p-1}\|w^k\|_{\sigma_k}:=\|w^{k^{**}}\|_{\sigma_{k^{**}}},~0\leq k^*\leq p.
\end{eqnarray}
For convenience, we assume that $k^*=p$ and $k^{**}=p-1$. Using the same idea of the proof of $k^*=p$ and $k^{**}=p-1$, we can verify other cases. In fact, the proof process of the cases of $k^*\neq p$ or $k^{**}\neq p-1$ is more simple than the case of $k^*=p$ and $k^{**}=p-1$.

This proof is divided into the following situations. If
\begin{eqnarray*}
\epsilon cN_p^{4+\sigma_p}\sum_{k=0}^{p-1}\|w^{k}\|_{\sigma_{k}}<1,
\end{eqnarray*}
we have
\begin{eqnarray*}
&&Case~1:~2C(\alpha_p)N_p^{\tau+\kappa_0}\|E^{p-1}\|_{\sigma_{p-1}}>1,~\sum_{k=0}^{p-1}\|w^{k}\|_{\sigma_{p}}>C_{3,R'},\\
&&Case~2:~2C(\alpha_p)N_p^{\tau+\kappa_0}\|E^{p-1}\|_{\sigma_{p-1}}>1,~\sum_{k=0}^{p-1}\|w^{k}\|_{\sigma_{p}}\leq C_{3,R'},\\
&&Case~3:~2C(\alpha_p)N_p^{\tau+\kappa_0}\|E^{p-1}\|_{\sigma_{p-1}}\leq1,~\sum_{k=0}^{p-1}\|w^{k}\|_{\sigma_{p}}>C_{3,R'},\\
&&Case~4:~2C(\alpha_p)N_p^{\tau+\kappa_0}\|E^{p-1}\|_{\sigma_{p-1}}\leq1,~\sum_{k=0}^{p-1}\|w^{k}\|_{\sigma_{p}}\leq C_{3,R'}.
\end{eqnarray*}
We only prove the case $1$, the rest cases are similar. By (\ref{E3-4}) in Lemma 2, (\ref{E2-32}) and (\ref{E2-32rr}), we derive
\begin{eqnarray}\label{E3-18}
\|w^p\|_{\sigma_p}&=&\|-(\mathcal{J}_{\omega}^{(N_p)})^{-1}E^{p-1}\|_{\sigma_p}\nonumber\\
&\leq&C(\alpha_p)N_{p}^{\tau+\kappa_0}\|E^{p-1}\|_{\sigma_{p-1}}\left(1+\epsilon cN_p^{4+\sigma_p}\sum_{k=0}^{p-1}\|w^{k}\|_{\sigma_{k}}\right)^3\nonumber\\
&\leq&C(\alpha_p)N_{p}^{\tau+\kappa_0}\|E^{p-1}\|_{\sigma_{p-1}}.
\end{eqnarray}
By (\ref{E3-3R3}) in Lemma 1, (\ref{E3-17}), (\ref{E2-32r})--(\ref{E3-18}), we have
\begin{eqnarray}\label{E3-20}
\|E^p\|_{\sigma_p}&=&2\omega^2\epsilon\|\Psi^{(N_p)}(f(\sum_{k=0}^{p}w^k_t,\sum_{k=0}^{p}w^k_y,\sum_{k=0}^{p}w^k_{tt},\sum_{k=0}^{p}w^k_{yy})
-f(\sum_{k=0}^{p-1}w^k_t,\sum_{k=0}^{p-1}w^k_y,\sum_{k=0}^{p-1}w^k_{tt},\sum_{k=0}^{p-1}w^k_{yy})\nonumber\\
&&-D_wf(\sum_{k=0}^{p-1}w^k_t,\sum_{k=0}^{p-1}w^k_y,\sum_{k=0}^{p-1}w^k_{tt},\sum_{k=0}^{p-1}w^k_{yy})\|_{\sigma_p}\nonumber\\
&\leq&2\omega^2\epsilon C_{\epsilon}N_p^4\|w^p\|_{\sigma_p}^2(C_{3,R'}+\sum_{k=0}^{p-1}\|w^{k}\|_{\sigma_k})(1+\|w^p\|_{\sigma_p}^2)\nonumber\\
&\leq&2\omega^2\epsilon C_{\epsilon}C^2(\alpha_p)N_{p}^{2(\tau+\kappa_0+2)}\|E^{p-1}\|^2_{\sigma_{p-1}}
(C_{3,R'}+\sum_{k=0}^{p-1}\|w^{k}\|_{\sigma_k})(1+2C(\alpha_p)N_{p}^{\tau+\kappa_0}\|E^{p-1}\|_{\sigma_{p-1}})\nonumber\\
&\leq&8C_{\epsilon}\omega^2\epsilon C^3(\alpha_p)N_p^{3(\tau+\kappa_0)+4}\|E^{p-1}\|_{\sigma_{p-1}}^3(\sum_{k=0}^{p-1}\|w^{k}\|_{\sigma_k})\nonumber\\
&\leq&8C_{\epsilon}\omega^2\epsilon C^3(\alpha_p)N_p^{3(\tau+\kappa_0)+4}\|E^{p-1}\|_{\sigma_{p-1}}^3p\|w^{p}\|_{\sigma_{p}}\nonumber\\
&\leq&8C_{\epsilon}\omega^2\epsilon C^4(\alpha_p)N_{p}^{4(\tau+\kappa_0)+5}\|E^{p-1}\|_{\sigma_{p-1}}^4\nonumber\\
&\leq&(8C_{\epsilon}\omega^2\epsilon)^{4+1}N_{p}^{4(\tau+\kappa_0)+5}N_{p-1}^{4^2(\tau+\kappa_0)+5\times4}C^4(\alpha_p)C^{4^2}(\alpha_{p-1})\|E^{p-2}\|_{\sigma_{p-2}}^{4^2}\nonumber\\
&=&(8C_{\epsilon}\omega^2\epsilon)^{4+1}N_0^{4p(\tau+\kappa_0)+4^2(p-1)(\tau+\kappa_0)+5+5\times4}C^4(\alpha_p)C^{4^2}(\alpha_{p-1})\|E^{p-2}\|_{\sigma_{p-2}}^{4^2}\nonumber\\
&\leq&\cdots\nonumber\\
&\leq&(8C_{\epsilon}\omega^2\epsilon)^{\sum_{k=1}^{p-1}4^k+1}N_0^{(\tau+\kappa_0)4^{p+2}+5\times4^{p+1}}\|E^{0}\|_{\sigma_{0}}^{4^p}
\prod_{k=1}^{p}C^{4^k}(\alpha_{p+1-k})\nonumber\\
&\leq&(8C_{\epsilon}\omega^2\epsilon)^{4^p}(N_0^{16(\tau+\kappa_0)+20}\|E^{0}\|_{\sigma_{0}})^{4^p}
\prod_{k=1}^{p}C^{4^k}(\alpha_{p+1-k})\nonumber\\
&\leq&(8C_{\epsilon}\omega^2\epsilon)^{4^p}\|\tilde{E}^{0}\|_{\sigma_{0}+16(\tau+\kappa_0)+20}^{4^p}
\prod_{k=1}^{p}C^{4^k}(\alpha_{p+1-k})\nonumber\\
&\leq&(8^{4^2+1}C_{\epsilon}\omega^2\epsilon c^{16}_{\tau,\sigma,\bar{\sigma},\gamma_1,\gamma}\|\tilde{E}^0\|_{\sigma_{0}+16(\tau+\kappa_0)+20})^{4^p},
\end{eqnarray}
which means that if we choose a suitable small $\epsilon$ such that the second inequality in (\ref{E6-1}) holds for some $0<d<1$.
 Now we turn to estimate the term $\|w^p\|_{\sigma_p}$. It follows from (\ref{E3-18})--(\ref{E3-20}) that
\begin{eqnarray*}
\|w^p\|_{\sigma_p}
&\leq&C(\alpha_p)N_{p}^{\tau+\kappa_0}(8^{4^2+1}C_{\epsilon}\omega^2\epsilon c^{16}(\tau,\sigma,\bar{\sigma},\gamma_1,\gamma)\|\tilde{E}^0\|_{\sigma_{0}+16(\tau+\kappa_0)+20})^{4^{p-1}},
\end{eqnarray*}
which implies that the first inequality in (\ref{E6-1}) holds for a sufficient small $\epsilon$.

If
\begin{eqnarray*}
\epsilon cN_p^{4+\sigma_p}\sum_{k=0}^{p-1}\|w^{k}\|_{\sigma_{k}}\geq1,
\end{eqnarray*}
then by (\ref{E3-4}) in Lemma 2,  (\ref{E2-32}), (\ref{E2-32rr}) and (\ref{E6-2}), we derive
\begin{eqnarray}\label{E3-18R}
\|w^p\|_{\sigma_p}&=&\|-(\mathcal{J}_{\omega}^{(N_p)})^{-1}E^{p-1}\|_{\sigma_p}\nonumber\\
&\leq&C(\alpha_p)N_{p}^{\tau+\kappa_0}\|E^{p-1}\|_{\sigma_{p-1}}\left(1+\epsilon cN_p^{4+\sigma_p}\sum_{k=0}^{p-1}\|w^{k}\|_{\sigma_{k}}\right)^3\nonumber\\
&\leq&\epsilon^3c^3N_p^{12+3\sigma_p+\tau+\kappa_0}C(\alpha_p)\|E^{p-1}\|_{\sigma_{p-1}}(\sum_{k=0}^{p-1}\|w^{k}\|_{\sigma_{k}})^3\nonumber\\
&\leq&\epsilon^3c^3N_p^{12+3\sigma_p+\tau+\kappa_0}C(\alpha_p)p^3\|E^{p-1}\|_{\sigma_{p-1}}\|w^{p-1}\|_{\sigma_{p-1}}^3\nonumber\\
&\leq&\epsilon^3c^3N_p^{12+3\sigma_p+\tau+\kappa_0}C(\alpha_p)p^3\|E^{p-1}\|_{\sigma_{p-1}}.
\end{eqnarray}

In what follows, we need to divide into the following situations
\begin{eqnarray*}
&&Case~1^{*}:~\epsilon^3c^3N_p^{12+3\sigma_p+\tau+\kappa_0}C(\alpha_p)p^3\|E^{p-1}\|_{\sigma_{p-1}}>1,~\sum_{k=0}^{p-1}\|w^{k}\|_{\sigma_{k}}>C_{3,R'},\\
&&Case~2^{*}:~\epsilon^3c^3N_p^{12+3\sigma_p+\tau+\kappa_0}C(\alpha_p)p^3\|E^{p-1}\|_{\sigma_{p-1}}\leq1,~\sum_{k=0}^{p-1}\|w^{k}\|_{\sigma_{k}}>C_{3,R'},\\
&&Case~3^{*}:~\epsilon^3c^3N_p^{12+3\sigma_p+\tau+\kappa_0}C(\alpha_p)p^3\|E^{p-1}\|_{\sigma_{p-1}}>1,~\sum_{k=0}^{p-1}\|w^{k}\|_{\sigma_{k}}\leq C_{3,R'},\\
&&Case~4^{*}:~\epsilon^3c^3N_p^{12+3\sigma_p+\tau+\kappa_0}C(\alpha_p)p^3\|E^{p-1}\|_{\sigma_{p-1}}\leq1,~\sum_{k=0}^{p-1}\|w^{k}\|_{\sigma_{k}}\leq C_{3,R'}.
\end{eqnarray*}

Now we discuss the case $1^{*}$. The idea of proof of the rest three cases is the same, so we omit it.
By (\ref{E2-32r})--(\ref{E3-18}), we derive
\begin{eqnarray}\label{E3-20RRR}
\|E^p\|_{\sigma_p}&=&2\omega^2\epsilon\|\Psi^{(N_p)}(f(\sum_{k=0}^{p}w^k_t,\sum_{k=0}^{p}w^k_y,\sum_{k=0}^{p}w^k_{tt},\sum_{k=0}^{p}w^k_{yy})
-f(\sum_{k=0}^{p-1}w^k_t,\sum_{k=0}^{p-1}w^k_y,\sum_{k=0}^{p-1}w^k_{tt},\sum_{k=0}^{p-1}w^k_{yy})\nonumber\\
&&-D_wf(\sum_{k=0}^{p-1}w^k_t,\sum_{k=0}^{p-1}w^k_y,\sum_{k=0}^{p-1}w^k_{tt},\sum_{k=0}^{p-1}w^k_{yy})\|_{\sigma_p}\nonumber\\
&\leq&2\omega^2\epsilon C_{\epsilon}N_p^4\|w^p\|_{\sigma_p}^2(C_{3,R'}+\sum_{k=0}^{p-1}\|w^{k}\|_{\sigma_k})(1+\|w^p\|_{\sigma_p}^2)\nonumber\\
&\leq&4\omega^2\epsilon^{13} C_{\epsilon}N_p^{52+12\sigma+4\tau+4\kappa_0}C^4(\alpha_p)p^{12}\|E^{p-1}\|_{\sigma_{p-1}}^4\nonumber\\
&\leq&4\omega^2\epsilon^{13} C_{\epsilon}N_p^{64+12\sigma+4\tau+4\kappa_0}C^4(\alpha_p)\|E^{p-1}\|_{\sigma_{p-1}}^4\nonumber\\
&\leq&4^{4+1}\omega^{2+2\times4}\epsilon^{13+13\times4}C_{\epsilon}N_0^{(p+4(p-1))(64+12\sigma+4\tau+4\kappa_0)}C^4(\alpha_p)C^{4\times4}(\alpha_{p-1})\|E^{p-1}\|_{\sigma_{p-1}}^{4^2}\nonumber\\
&\leq&\ldots\nonumber\\
&\leq&(4\times16^{\tau}\omega^2\epsilon^{13}C_{\tau,\sigma,\bar{\sigma},\gamma_1,\gamma}N_0^{64+12\sigma+4\tau+4\kappa_0}\|E_0\|_{\sigma_0})^{4^{p+1}}.
\end{eqnarray}
We can choose a suitable small $\epsilon$ such that the second inequality in (\ref{E6-1}) holds for some $0<d<1$.
 Now we turn to estimate the term $\|w^p\|_{\sigma_p}$. It follows from (\ref{E3-18R})--(\ref{E3-20RRR}) that
\begin{eqnarray*}
\|w^p\|_{\sigma_p}
&\leq&\epsilon^3c^3N_p^{12+3\sigma_p+\tau+\kappa_0}C(\alpha_p)p^3(4\times16^{\tau}\omega^2\epsilon^{13}C_{\epsilon,\sigma,\bar{\sigma}}N_0^{64+12\sigma+4\tau+4\kappa_0}\|E_0\|_{\sigma_0})^{4^{p}},
\end{eqnarray*}
which implies that the first inequality in (\ref{E6-1}) holds for a sufficient small $\epsilon$.

Thus we conclude that (\ref{E6-1}) holds. Furthermore, we obtain that
\begin{eqnarray*}
\lim_{p\longrightarrow\infty}\|E^p\|_{\sigma_p}=0.
\end{eqnarray*}
By the decay rate of $\|E^p\|_{\sigma_p}$ and the relationship between $\|w^p\|_{\sigma_p}$ and $\|E^p\|_{\sigma_p}$, we get that the series $\sum_{k=0}^{\infty}w^k$ is convergence. Thus equation (\ref{E3-6}) has a solution
\begin{eqnarray*}
w^{\infty}:=\sum_{k=0}^{\infty}w^k\in
\textbf{H}_{\bar{\sigma}}.
\end{eqnarray*}
This completes the proof.
\end{proof}

 Next result gives the uniqueness of solutions for equation (\ref{E3-6}).
\begin{lemma}
Assume that $\omega$ satisfies (\ref{E1-4R}) and (\ref{E10-1}). Equation (\ref{E3-6})
has a unique solution $w\in
\textbf{H}_{\bar{\sigma}}\cap\textbf{B}_1(0)$ obtained in
Lemma 14.
\end{lemma}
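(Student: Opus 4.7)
The plan is a proof by contradiction. Suppose $w$ and $\tilde{w}$ are two solutions in $\textbf{H}_{\bar{\sigma}}\cap\textbf{B}_1(0)$ of the limiting equation $J_\omega u + 2\omega^2\epsilon f(u_t,u_y,u_{tt},u_{yy})=0$, and set $h := w - \tilde{w}\in\textbf{H}_{\bar\sigma}$. Subtracting the two equations and applying to $f$ the same Taylor-type expansion that underlies (\ref{E3-3R3}) in Lemma 1, namely
\begin{eqnarray*}
f(w)-f(\tilde w) = D_w f(\tilde w)\,h + \mathcal{R}(h,\tilde w),
\end{eqnarray*}
with remainder $\mathcal{R}$ consisting of terms quadratic and cubic in $h$, yields
\begin{eqnarray*}
\bigl[J_\omega + 2\omega^2\epsilon D_w f(\tilde w)\bigr] h = -2\omega^2\epsilon\,\mathcal{R}(h,\tilde w).
\end{eqnarray*}

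I would then, for each $p$, project this identity by $\Psi^{(N_p)}$ and invert the truncated linearized operator $\mathcal{J}^{(N_p)}_\omega(\epsilon,\tilde w)$ via Lemma 2. The hypotheses of Lemma 2 are legitimate for $\epsilon$ sufficiently small because the construction in Lemma 16 gives $\|\tilde w\|_{\bar\sigma}=O(\epsilon)\le 1$, and the inductive bound (\ref{E8-1}) is inherited from the Nash--Moser scheme that produced $\tilde w$. Combining the invertibility estimate (\ref{E3-4}) with the quadratic bound (\ref{E3-3R3}) of Lemma 1 yields, for $s_1<s_2\le\bar\sigma$, a schematic inequality of the form
\begin{eqnarray*}
\|\Psi^{(N_p)}h\|_{s_1}\le C_{s_1,s_2}\,N_p^{\tau+\kappa_0+4}\,\epsilon\,(C_3+\|\tilde w\|_{s_2})\bigl(\|h\|_{s_2}^2+\|h\|_{s_2}^3\bigr),
\end{eqnarray*}
while the complementary high-frequency piece $(I-\Psi^{(N_p)})h$ is controlled by the smoothing property (\ref{E3-1R2}) and tends to zero as $N_p\to\infty$.

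Letting $N_p\to\infty$ and exploiting the smallness $\|\tilde w\|_{\bar\sigma}=O(\epsilon)$ together with $\|h\|_{\bar\sigma}\le 2$, one extracts an inequality of the form
\begin{eqnarray*}
\|h\|_{\bar\sigma-\delta}\le C(\delta)\,\epsilon\,\|h\|_{\bar\sigma}
\end{eqnarray*}
for a small loss $\delta>0$, which can be closed by a short bootstrap, iterating on progressively higher regularity and using that $h$ lies in the domain of the inverse operator at every scale $N_p$, to yield $\|h\|_{\bar\sigma}\le C'\epsilon\,\|h\|_{\bar\sigma}$. For $\epsilon$ sufficiently small this forces $h\equiv 0$, hence $w=\tilde w$. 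The principal obstacle is the loss of derivatives built into (\ref{E3-4}), which is the very phenomenon that made the existence argument in Lemma 16 require a Nash--Moser scheme rather than a simple fixed-point iteration; an alternative, perhaps cleaner, way to close the uniqueness is to observe that any solution in $\textbf{B}_1(0)\cap\textbf{H}_{\bar\sigma}$ must arise as the limit of the same Nash--Moser sequence initiated from the fixed initial approximation $w^0$, so uniqueness of the iterative limit automatically gives $w=\tilde w$.
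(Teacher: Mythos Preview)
Your setup is correct up through the identity
\[
\bigl[J_\omega+2\omega^2\epsilon D_wf(\tilde w)\bigr]h=-2\omega^2\epsilon\,\mathcal{R}(h,\tilde w),
\]
and the use of Lemma~2 together with the quadratic remainder bound (\ref{E3-3R3}) is also what the paper does. The gap is in the step where you ``let $N_p\to\infty$'' and claim to extract
\[
\|h\|_{\bar\sigma-\delta}\le C(\delta)\,\epsilon\,\|h\|_{\bar\sigma}.
\]
This does not follow: your own schematic estimate carries the factor $N_p^{\tau+\kappa_0+4}$, which diverges as $N_p\to\infty$, and the smoothing bound (\ref{E3-1R2}) for $(I-\Psi^{(N_p)})h$ requires regularity strictly above $\bar\sigma$, which you do not have. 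So neither piece is under control in the limit, and no single linear contraction $\|h\|\le C\epsilon\|h\|$ can be obtained. You correctly identify this as ``the principal obstacle'' but then do not resolve it; the fallback claim that any solution in $\textbf{B}_1(0)$ must arise as the limit of the Nash--Moser sequence from the fixed $w^0$ is itself an assertion equivalent in strength to uniqueness and is not justified.

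The paper closes exactly this gap by \emph{iterating} the quadratic estimate along a decreasing scale $\sigma_p\downarrow\bar\sigma$ rather than passing to the limit in one shot. From (\ref{E3-4}) and (\ref{E3-3R3}) one gets
\[
\|\psi\|_{\sigma_p}\le 2\omega^2\epsilon\,C_\epsilon\,C(\alpha_p)\,N_p^{\tau+\kappa_0+4}\,\|\psi\|_{\sigma_{p-1}}^2,
\]
and iterating this inequality $p$ times yields
\[
\|\psi\|_{\sigma_p}\le \bigl(2^{\tau+1}\omega^2\epsilon\,C_{\tau,\sigma,\bar\sigma,\gamma_1,\gamma}\,N_0^{\tau+\kappa_0+4}\,\|\psi\|_{\sigma_0}\bigr)^{2^p}.
\]
The point is that the quadratic exponent produces a $2^p$ in the power, which dominates the merely geometric growth of $C(\alpha_p)$ and $N_p^{\tau+\kappa_0+4}$; for $\epsilon$ small the base is $<1$ and $\|\psi\|_{\bar\sigma}=\lim_p\|\psi\|_{\sigma_p}=0$. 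Thus the essential idea you are missing is not to linearize the smallness but to exploit the \emph{quadratic} remainder through a Newton-type cascade on the scale $\{\sigma_p\}$.
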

\begin{proof}
Let $w,w'\in
\textbf{H}_{\bar{\sigma}}\cap\textbf{B}_1(0)$ be two solutions of
system (\ref{E3-6}), where
\begin{eqnarray*}
\textbf{B}_1(0):=\{w|\|w\|_s<\delta,~for~some~0<\delta<1,~\forall s>\sigma_0\}.
\end{eqnarray*}
Let $\psi=w-w'$. Our target is to prove $\psi=0$.
By (\ref{E3-6}), we have
\begin{eqnarray*}
\mathcal{J}_{\omega}\psi&+&2\omega^2\epsilon\Psi^{(N_p)}D_wf(w'_t,w'_y,w'_{tt},w'_{yy})\psi\nonumber\\
&&+2\omega^2\epsilon\Psi^{(N_p)}
(f(w_t,w_y,w_{tt},w_{yy})-f(w'_t,w'_y,w'_{tt},w'_{yy})-D_wf(w'_t,w'_y,w'_{tt},w'_{yy})\psi)=0,
\end{eqnarray*}
which implies that
\begin{eqnarray}
\label{E3-24}
\psi&=&-2\omega^2\epsilon(\mathcal{J}_{\omega}+2\omega^2\epsilon\Psi^{(N_p)}D_wf(w'_t,w'_y,w'_{tt},w'_{yy}))^{-1}\nonumber\\
&&\times\Psi^{(N_p)}(f(w_t,w_y,w_{tt},w_{yy})-f(w'_t,w'_y,w'_{tt},w'_{yy})-D_wf(w'_t,w'_y,w'_{tt},w'_{yy})\psi).
\end{eqnarray}
If
\begin{eqnarray*}
\epsilon cN_p^{4+\sigma}\|w'\|_{\sigma_{p-1}}<1,
\end{eqnarray*}
by (\ref{E3-3R3}), (\ref{E3-4}), $(\ref{E3-24})$ and $N_p=N_0^p$, $\forall p\in\textbf{N}$, we have
\begin{eqnarray}\label{E5-2}
\|\psi\|_{\sigma_p}&=&2\omega^2\epsilon\|(\mathcal{J}^{N_p}_{\omega})^{-1}\Psi^{(N_p)}(f(w_t,w_y,w_{tt},w_{yy})-f(w'_t,w'_y,w'_{tt},w'_{yy})-D_wf(w'_t,w'_y,w'_{tt},w'_{yy})\psi)\|_{\sigma_p}\nonumber\\
&\leq&2\omega^2\epsilon C_{\epsilon}C(\alpha_p)N_p^{\tau+\kappa_0+4}\|\psi\|_{\sigma_{p-1}}^2(1+\epsilon cN_p^{4+\sigma}\|w'\|_{\sigma_{p-1}})^3(C_{3,R'}+\|w'\|_{\sigma_{p-1}})(1+\|\psi\|_{\sigma_{p-1}}^2)\nonumber\\
&\leq&2\omega^2\epsilon C_{\epsilon}C(\alpha_p)N_p^{\tau+\kappa_0+4}\|\psi\|_{\sigma_{p-1}}^2\nonumber\\
&\leq&2^{1+2}\omega^{2+2\times2}\epsilon^{1+2} C_{\epsilon}C(\alpha_p)C^2(\alpha_{p-1})N_0^{(p+2(p-1))(\tau+\kappa_0+4)}\|\psi\|_{\sigma_{p-2}}^{2^2}\nonumber\\
&\leq&\cdots\nonumber\\
&\leq&(2^{\tau+1}\omega^{2}\epsilon C_{\tau,\sigma,\bar{\sigma},\gamma_1,\gamma}N_0^{\tau+\kappa_0+4}\|\psi\|_{\sigma_{0}})^{2^p}.
\end{eqnarray}
We choose a suitable $\epsilon$ such that
\begin{eqnarray*}
0<2^{\tau+1}\omega^{2}\epsilon C_{\tau,\sigma,\bar{\sigma},\gamma_1,\gamma}N_0^{\tau+\kappa_0+4}\|\psi\|_{\sigma_{0}}<1,
\end{eqnarray*}
and then let $p\longrightarrow\infty$, we obtain
\begin{eqnarray*}
\lim_{p\longrightarrow\infty}\|\psi\|_{\bar{\sigma}}=0.
\end{eqnarray*}
Using the same idea, we can deal with the case of $\epsilon cN_p^{4+\sigma}\|w'\|_{\sigma_{p-1}}\geq1$.
This ends the proof.
\end{proof}

Next result shows that the solution $w^{\infty}$ is a non-trivial solution for system (\ref{E3-6}).
\begin{lemma}
The periodic solution $w^{\infty}$ constructed in Lemma 14 is non-trivial.
\end{lemma}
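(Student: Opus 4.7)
The plan is to show $w^{\infty}\not\equiv 0$ by bounding the Newton corrections in $\textbf{H}_{\bar{\sigma}}$ by a quantity strictly smaller than $\|w^0\|_{\bar{\sigma}}$, and then applying the reverse triangle inequality. Recall from Lemma 16 that
\begin{eqnarray*}
w^{\infty}=w^0+\sum_{p=1}^{\infty}w^p,
\end{eqnarray*}
where $w^0\in\textbf{H}_{\sigma_0}^{(N_0)}$ was chosen so that $0<\|w^0\|_{\sigma_0}<R$ and $w^0$ is not constant. Since $\textbf{H}_s^0$ has zero mean, ``not constant'' is equivalent to $w^0\not\equiv0$, and in the finite dimensional space $\textbf{W}^{(N_0)}$ this in turn gives a genuine positive lower bound $\|w^0\|_{\bar{\sigma}}>0$ (all Sobolev norms being equivalent on $\textbf{W}^{(N_0)}$).

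First I would estimate the first correction: by Lemma 13 (applied with $\alpha=\alpha_1$),
\begin{eqnarray*}
\|w^1\|_{\bar{\sigma}}\le \|w^1\|_{\sigma_1}\le 2\omega^2\epsilon\, C(\alpha_1)\,C_{\epsilon}\,N_1^{\tau+\kappa_0+4}\,\|w^0\|_{\sigma_0}^3\bigl(1+\epsilon cN_1^{4+\sigma}\|w^0\|_{\sigma_0}\bigr)^3.
\end{eqnarray*}
Since the right hand side is $O(\epsilon)$, choosing $\epsilon>0$ small renders $\|w^1\|_{\bar{\sigma}}$ as small as desired relative to $\|w^0\|_{\bar{\sigma}}$. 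Next, for $p\ge 2$, the proof of Lemma 16 supplies the super--geometric bounds
\begin{eqnarray*}
\|E^p\|_{\sigma_p}\le d^{4^p},\qquad \|w^p\|_{\sigma_p}\le C(\alpha_p)\,N_p^{\tau+\kappa_0}\,\|E^{p-1}\|_{\sigma_{p-1}}\le C(\alpha_p)\,N_p^{\tau+\kappa_0}\,d^{4^{p-1}},
\end{eqnarray*}
valid for some $0<d<1$ provided $\epsilon$ is sufficiently small. Since $\bar{\sigma}<\sigma_p$, the same bound controls $\|w^p\|_{\bar{\sigma}}$, and the double--exponential decay $d^{4^{p-1}}$ crushes the polynomial factor $N_p^{\tau+\kappa_0}=N_0^{p(\tau+\kappa_0)}$, giving
\begin{eqnarray*}
\sum_{p=2}^{\infty}\|w^p\|_{\bar{\sigma}}\le \sum_{p=2}^{\infty}C(\alpha_p)\,N_0^{p(\tau+\kappa_0)}\,d^{4^{p-1}}<+\infty,
\end{eqnarray*}
and in fact this tail can be made arbitrarily small by further shrinking $\epsilon$ (which decreases $d$).

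Combining these two estimates yields a uniform bound
\begin{eqnarray*}
\sum_{p=1}^{\infty}\|w^p\|_{\bar{\sigma}}\le K(w^0)\,\epsilon
\end{eqnarray*}
for some constant $K(w^0)$ depending on $\|w^0\|_{\sigma_0}$ but not on $\epsilon$. Fixing $\epsilon$ so small that $K(w^0)\,\epsilon<\tfrac12\|w^0\|_{\bar{\sigma}}$, the reverse triangle inequality gives
\begin{eqnarray*}
\|w^{\infty}\|_{\bar{\sigma}}\ge \|w^0\|_{\bar{\sigma}}-\sum_{p=1}^{\infty}\|w^p\|_{\bar{\sigma}}\ge \tfrac{1}{2}\|w^0\|_{\bar{\sigma}}>0,
\end{eqnarray*}
which rules out the trivial solution $w^{\infty}\equiv 0$. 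The main obstacle is the first step of this chain, namely securing a genuine positive lower bound on $\|w^0\|_{\bar{\sigma}}$ that is independent of $\epsilon$; this is handled by fixing $w^0$ once and for all in the finite--dimensional subspace $\textbf{W}^{(N_0)}$ (where all norms are comparable) before the smallness of $\epsilon$ is invoked to dominate the Newton tail. After unrescaling $w\to\epsilon w$, the conclusion $w^{\infty}\neq0$ then translates into a genuinely $(t,y)$--dependent displacement $u$, hence a truly quasi--periodic, non--stationary motion of $\Sigma$.
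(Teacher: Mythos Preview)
Your argument is correct in spirit and takes a genuinely different route from the paper. The paper proceeds by contradiction at each finite stage: assuming $\sum_{k\le p}w^k=C$, it unwinds the Newton relation $w^p=-(\mathcal{J}^{(N_p)}_{\omega})^{-1}E^{p-1}$ to force the identity $f(w^0_t,w^0_y,w^0_{tt},w^0_{yy})=D_wf(w^0_t,w^0_y,w^0_{tt},w^0_{yy})w^0$, which together with invertibility of $\mathcal{J}^{(N_0)}_{\omega}$ contradicts $w^0\neq C$. Your approach instead bounds the full Newton tail quantitatively and applies the reverse triangle inequality. This is more direct, and it has a real advantage: it says something about the \emph{limit} $w^{\infty}$, whereas the paper's induction only shows each finite partial sum is non-constant and leaves implicit why this persists in the limit.

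One point needs tightening. You write that the obstacle is ``securing a genuine positive lower bound on $\|w^0\|_{\bar\sigma}$ that is independent of $\epsilon$'' and propose to resolve it by ``fixing $w^0$ once and for all \ldots\ before the smallness of $\epsilon$ is invoked.'' But the $O(\epsilon)$ bound on $\|w^1\|$ you quote from Lemma~13 is obtained via the identity $E^0=2\omega^2\epsilon(I-\Psi^{(N_0)})\Psi^{(N_1)}f(w^0_t,w^0_y,w^0_{tt},w^0_{yy})$, which holds precisely because $w^0$ solves the $N_0$--truncated problem; hence $w^0=w^0(\epsilon)$ cannot be frozen independently of $\epsilon$. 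The repair is to recast everything as a ratio: Lemma~13 gives $\|w^1\|_{\bar\sigma}\le C\epsilon\,\|w^0\|_{\sigma_0}^3$, so
\[
\frac{\|w^1\|_{\bar\sigma}}{\|w^0\|_{\bar\sigma}}\le C'\epsilon\,\|w^0\|_{\sigma_0}^{2}\le C'\epsilon R^{2},
\]
small uniformly in $\|w^0\|\in(0,R)$. Similarly, since $d\lesssim \epsilon\|\tilde E^0\|\lesssim \epsilon^{2}\|w^0\|^{3}$, one gets $\|w^p\|_{\bar\sigma}/\|w^0\|_{\bar\sigma}\lesssim C(\alpha_p)N_p^{\tau+\kappa_0}\epsilon^{2\cdot 4^{p-1}}R^{\,3\cdot 4^{p-1}-1}$ for $p\ge 2$, again uniformly small. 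Summing yields $\sum_{p\ge1}\|w^p\|_{\bar\sigma}\le\tfrac12\|w^0\|_{\bar\sigma}$ for $\epsilon$ small, and your conclusion $\|w^{\infty}\|_{\bar\sigma}\ge\tfrac12\|w^0\|_{\bar\sigma}>0$ follows without ever needing $\|w^0\|$ to be $\epsilon$--independent.
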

\begin{proof}
By induction, we prove that the periodic solution $w_{\infty}$ is a non-trivial, i.e.,
\begin{eqnarray*}
w^{\infty}=\sum_{k=0}^{\infty}w^k\neq C,~~for~C\in\textbf{R}.
\end{eqnarray*}
As we chosen, $w^0$ is not constant. We prove that $w^0+w^1$ is not a constant, where $w^1$ is constructed in (\ref{E3-8}).
Arguing by contradiction, assume that it holds
\begin{eqnarray*}
w^0+w^1=C.
\end{eqnarray*}
By (\ref{E6-3}), we have
\begin{eqnarray*}
-(\mathcal{J}^{(N_1)}_{\omega})^{-1}E^0=C-w^0,
\end{eqnarray*}
which implies that
\begin{eqnarray}\label{E10-2}
E^0=-\mathcal{J}^{(N_1)}_{\omega}(C-w^0).
\end{eqnarray}
Note that $E^0=\mathcal{J}_{\omega}w^0+2\omega^2\epsilon\Psi^{(N_1)}f(w^0_t,w_y^0,w^0_{tt},w_{yy}^0)$.
By (\ref{E10-2}), we derive
\begin{eqnarray*}
\mathcal{J}_{\omega}w^0+2\omega^2\epsilon\Psi^{(N_1)}f(w^0_t,w_y^0,w^0_{tt},w_{yy}^0)=\mathcal{J}^{(N_1)}_{\omega}(w^0-C),
\end{eqnarray*}
which together with the definition of the linearized operator $\mathcal{J}^{(N_1)}_{\omega}$ in (\ref{E4-1}) gives that
\begin{eqnarray*}
f(w^0_t,w_y^0,w^0_{tt},w_{yy}^0)=\partial_wf(w^0_t,w_y^0,w^0_{tt},w_{yy}^0)w^0.
\end{eqnarray*}
But (\ref{E3-3R1}) tells us that above equality can not holds for $w^0\neq C$. This is because that if
above equality holds, then it follows from (\ref{E3-6}) that
\begin{eqnarray*}
\mathcal{J}_{\omega}w^0+2\omega^2\epsilon\Psi^{(N_0)}f(w^0_t,w_y^0,w^0_{tt},w_{yy}^0)=\mathcal{J}_{\omega}w^0+2\omega^2\epsilon\Psi^{(N_0)}\partial_wf(w^0_t,w_y^0,w^0_{tt},w_{yy}^0)w^0=0.
\end{eqnarray*}
Note that $\mathcal{J}^{(N_0)}_{\omega}=\mathcal{J}_{\omega}+2\omega^2\epsilon\Psi^{(N_0)}\partial_wf(w^0_t,w_y^0,w^0_{tt},w_{yy}^0)$ is an invertible operator.
This combining with (\ref{E3-3R1}) gives that $w^0=C$. This contradicts $w^0\neq C$ in Lemma 12.
Hence $w^0+w^1$ is not a constant.

Assume that $\sum_{a=0}^{k-1}w^{a}+w^k$ is not a constant. Then by the same process of above proof, we get that $\sum_{a=0}^kw^{a}+w^{k+1}$ is also not a constant.
For conclusion, we obtain the periodic solution $w_{\infty}=\sum_{k=0}^{\infty}w^k$ is non-trivial. This finishes the proof.
\end{proof}

%\begin{acknowledgements}
%The author is greatly indebted to Prof G. Tian for
%his discussion on Nash-Moser iteration scheme and this problem and much kind help!
%This work was done in Beijing International Center for Mathematical Research, Peking University. The author is supported by NSFC No 11201172, SRFDP Grant No 20120061120002.
%\end{acknowledgements}

\end{document}